\newtheorem{theorem}{Theorem}
\theoremstyle{plain}
\newtheorem{corollary}{Corollary}
\newtheorem{definition}{Definition}
\newtheorem{notation}{Notation}
\newtheorem{remark}{Remark}
\numberwithin{equation}{section}
\begin{document}
\title[Nonlinear Operators, Fixed-Point Theorems, Nonlinear Equations]{On
some nonlinear operators, fixed-point theorems and nonlinear equations}
\author{Kamal N. Soltanov}
\address{{\small Dep. Math. Fac. Sci. Hacettepe University, Beytepe, Ankara,
TR-06800, TURKEY ; }\\
{\small Tel. +90 312 2977860; Fax: +90 312 2992017\ }}
\email{soltanov@hacettepe.edu.tr ; sultan\_kamal@hotmail.com\ }
\urladdr{http://www.mat.hacettepe.edu.tr/personel/akademik/ksoltanov/index.html }
\subjclass[2010]{{Primary 46T20, 47H10, 47J05; Secondary 35D35, 35J60, 35Q82 
}}
\keywords{Nonlinear continuous operator, closed image, fixed-point,
solvability theorem, nonlinear BVP }

\begin{abstract}
In this article we discuss the solvability of some class of fully nonlinear
equations, and equations with p-Laplacian in more general conditions by
using a new approach given in [1] for studying the nonlinear continuous
operator. Moreover we reduce certain general results for the continuous
operators acting on Banach spaces, and investigate their image. Here we also
consider the existence of a fixed-point of the continuous operators under
various conditions.
\end{abstract}

\thanks{Supported by 110T558-projekt of TUBITAK}
\maketitle

\section{Introduction}

In the present paper we consider the boundary-value problem for the fully
nonlinear equation of the second order 
\begin{equation}
F\left( x,u,Du,\Delta u\right) =h\left( x\right) ,\quad x\in \Omega , 
\tag{1.1}
\end{equation}%
and also for the nonlinear equations with $p$-Laplacian that depend upon the
parameters $\lambda $ and $\mu $ 
\begin{equation}
-\nabla \left( \left\vert \nabla u\right\vert ^{p-2}\nabla u\right) +G\left(
x,u,Du,\lambda ,\mu \right) =h\left( x\right) ,\quad x\in \Omega ,  \tag{1.2}
\end{equation}%
on the smooth bounded domain $\Omega \subset 
\mathbb{R}
^{n}$ ($n\geq 1$), where $F\left( x,\xi ,\eta ,\zeta \right) $ and $G\left(
x,\xi ,\eta ,\lambda ,\mu \right) $ are Caratheodory functions. We deal with
the properties of the nonlinear operators generated by the posed problems
and study the solvability of these problems by using the general results of
such type as in [1]. It should be noted that equations of such type arise in
the diffusion processes, reaction-diffusion processes etc., in the
steady-state case (see, for example [2 - 13] and their references).
Furthermore we discuss some of the nonlinear continuous mappings acting on
Banach spaces and an equation (inclusion) with mappings of such type.

The problems of such type were studied earlier under various conditions in
the semilinear ([2, 3, 7 - 11, 13] etc.) and in the fully nonlinear cases
([4, 12, 14] etc.). In the mentioned articles, the known general results
having such conditions that cannot be applicable to the problems considered
here, were used, whereas in this article we want to investigate these
problems under more general conditions. Therefore we need to use a general
result that will be applicable to the considered problems here and
consequently, the conditions of the general result differ from the
conditions of the known results. The results of [1] and the general results
adduced here allow us to study the imposed problem in more general
conditions.

So for our goal we lead to a fixed-point theorem for nonlinear continuous
mappings acting on Banach spaces and also a solvability theorem for
nonlinear equations involving continuous operators. These general results
allow us to study various nonlinear mappings and also nonlinear problems
under more general conditions. Therefore we investigate boundary value
problems (BVP) for the nonlinear differential equations by using the
mentioned general results. Moreover we obtain the existence of the fixed
point for the operator associated with imposed problem. We also note that
Theorem 1 of present article is a result of the type of Lax-Milgram theorem.

This article is constructed in the following way. In section 2, we lead a
general theorem that shows how one can locally determine the image of a
subset of the domain of definition under the continuous mapping acting in
Banach space (cf. [1, 8, 11 - 14] etc.). From this theorem we deduce the
solvability theorem and an existence of the fixed point of the continuous
mapping under the corresponding conditions, and consequently, an existence
of the fixed point of the mapping associated with the problem, considered in
the next section. In section 3, we prove a solvability theorem for an
equation with the perturbed operator on Banach spaces, and then in sections
4 and 5 by using these results we study the solvability of the boundary
value problem for various classes of nonlinear differential equations.

\section{\protect\bigskip Some General Results on Solvability}

Let $X,Y$ \ be reflexive Banach spaces and $X^{\ast },Y^{\ast }$ be their
dual spaces, moreover let $Y$ be reflexive with strictly convex norm
together with $Y^{\ast }$ (this condition is not complementary condition;
see, for example, [15]) and $f:D\left( f\right) \subseteq X\longrightarrow Y$
be an operator.

So, we will conduct here the special case of the main result of [1].
Consider the following conditions. Let the closed ball $B_{r_{0}}^{X}\left(
0\right) $ of $X$ be contained in $D\left( f\right) $, i.e. $%
B_{r_{0}}^{X}\left( 0\right) \subseteq D\left( f\right) $ $\subset X$ and on 
$B_{r_{0}}^{X}\left( 0\right) $ are fulfilled the conditions:

(i) $f:B_{r_{0}}^{X}\left( 0\right) \subseteq D\left( f\right) \subseteq
X\longrightarrow Y$ be a continuous operator that bounded on $%
B_{r_{0}}^{X}\left( 0\right) $, i.e.%
\begin{equation*}
\left\Vert f\left( x\right) \right\Vert _{Y}\leq \mu \left( \left\Vert
x\right\Vert _{X}\right) ,\quad \forall x\in B_{r_{0}}^{X}\left( 0\right) ;
\end{equation*}

(ii) there is a mapping $g:D\left( g\right) \subseteq X\longrightarrow
Y^{\ast }$ such that $D\left( f\right) \subseteq D\left( g\right) $, and for
any $S_{r}^{X}\left( 0\right) \subset B_{r_{0}}^{X}\left( 0\right) $, $%
0<r\leq r_{0}$, cl\ $g\left( S_{r}^{X}\left( 0\right) \right) =\overline{%
g\left( S_{r}^{X}\left( 0\right) \right) }\equiv S_{r}^{Y^{\ast }}\left(
0\right) $, $S_{r}^{X}\left( 0\right) \subseteq g^{-1}\left( S_{r}^{Y^{\ast
}}\left( 0\right) \right) $ 
\begin{equation}
\left\langle f\left( x\right) ,g\left( x\right) \right\rangle \geq \nu
\left( \left\Vert x\right\Vert _{X}\right) \left\Vert x\right\Vert _{X},%
\text{ a.e. }x\in B_{r_{0}}^{X}\left( 0\right) \quad \&\ \nu \left(
r_{0}\right) \geq \delta _{0}>0  \tag{2.1}
\end{equation}%
holds\footnote{%
In particular, the mapping $g$ can be a linear bounded operator as $g\equiv
L:X\longrightarrow Y^{\ast }$ that satisfies the conditions of \textit{(ii). 
}}, where $\mu :R_{+}^{1}\longrightarrow R_{+}^{1}$ and $\nu
:R_{+}^{1}\longrightarrow R^{1}$ are continuous functions ( $\mu ,\nu \in
C^{0}$), moreover $\nu $ is the nondecreasing function for $\tau \geq \tau
_{0}$, $r_{0}\geq \tau _{0}\geq 0$; $\tau _{0},\delta _{0}>0$ are constants;

(iii) almost each $\widetilde{x}\in intB_{r_{0}}^{X}\left( 0\right) $
possesses a neighborhood $V_{\varepsilon }\left( \widetilde{x}\right) $, $%
\varepsilon \geq \varepsilon _{0}>0$ such that the following inequality 
\begin{equation}
\left\Vert f\left( x_{2}\right) -f\left( x_{1}\right) \right\Vert _{Y}\geq
\Phi \left( \left\Vert x_{2}-x_{1}\right\Vert _{X},\widetilde{x},\varepsilon
\right) ,  \tag{2.2}
\end{equation}%
holds for any $\forall x_{1},x_{2}\in V_{\varepsilon }\left( \widetilde{x}%
\right) \cap B_{r_{0}}^{X}\left( 0\right) $, where $\Phi \left( \tau ,%
\widetilde{x},\varepsilon \right) \geq 0$ is a continuous function at $\tau $
and $\Phi \left( \tau ,\widetilde{x},\varepsilon \right) =0\Leftrightarrow
\tau =0$ (in particular, $\widetilde{x}=0$, $\varepsilon =\varepsilon
_{0}=r_{0}$ and $V_{\varepsilon }\left( \widetilde{x}\right)
=V_{r_{0}}\left( 0\right) \equiv B_{r_{0}}^{X}\left( 0\right) $, consequently%
$\ \Phi \left( \tau ,\widetilde{x},\varepsilon \right) \equiv \Phi \left(
\tau ,0,r_{0}\right) $ on $B_{r_{0}}^{X}\left( 0\right) $),

\begin{theorem}
Let $X,Y$ be Banach spaces such as above and $f:D\left( f\right) \subseteq
X\longrightarrow Y$ be an operator. Assume that on the closed ball $%
B_{r_{0}}^{X}\left( 0\right) $ $\subseteq D\left( f\right) $ $\subset X$ the
conditions (i) and (ii) are fulfilled then the image $f\left(
B_{r_{0}}^{X}\left( 0\right) \right) $ of the ball $B_{r_{0}}^{X}\left(
0\right) $ contains an everywhere dense subset of $M$ that has the form 
\begin{equation*}
M\equiv \left\{ y\in Y\left\vert \ \left\langle y,g\left( x\right)
\right\rangle \leq \left\langle f\left( x\right) ,g\left( x\right)
\right\rangle ,\right. \forall x\in S_{r_{0}}^{X}\left( 0\right) \right\} .
\end{equation*}

Furthermore if in addition the image $f\left( B_{r_{0}}^{X}\left( 0\right)
\right) $ of the ball $B_{r_{0}}^{X}\left( 0\right) $ is closed or the
condition (iii) is fulfilled then the image $f\left( B_{r_{0}}^{X}\left(
0\right) \right) $ is a bodily subset of $Y$, moreover $f\left(
B_{r_{0}}^{X}\left( 0\right) \right) $ contains the above bodily subset $M$.
\end{theorem}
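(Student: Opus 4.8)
The plan is to separate the statement into a purely geometric part (the shape of $M$) and an analytic part (that $f\bigl(B_{r_0}^{X}(0)\bigr)$ actually fills $M$), and to attack them in that order. For the geometry, note that $M$ is the intersection over $x\in S_{r_0}^{X}(0)$ of the closed half-spaces $H_{x}=\{y\in Y:\langle y,g(x)\rangle\le\langle f(x),g(x)\rangle\}$, so $M$ is automatically closed and convex. To see it is bodily I would use (ii) and (2.1) together: for $x\in S_{r_0}^{X}(0)$ condition (ii) gives $\|g(x)\|_{Y^{\ast}}=r_{0}$, while (2.1) gives $\langle f(x),g(x)\rangle\ge\nu(r_{0})\,r_{0}\ge\delta_{0}r_{0}>0$. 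Hence whenever $\|y\|_{Y}\le\delta_{0}$ we get $\langle y,g(x)\rangle\le\|y\|_{Y}\|g(x)\|_{Y^{\ast}}\le\delta_{0}r_{0}\le\langle f(x),g(x)\rangle$, so that $B_{\delta_{0}}^{Y}(0)\subseteq M$. This already shows $\mathrm{int}\,M\neq\varnothing$, i.e. $M$ is bodily, and incidentally that the dense image $\overline{g(S_{r_{0}}^{X}(0))}=S_{r_{0}}^{Y^{\ast}}(0)$ lets the defining half-spaces run over essentially all directions of $Y^{\ast}$.

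The analytic core is to show that interior points of $M$ are reached (or approached) by the image. Fix $y\in\mathrm{int}\,M$ and choose $\varepsilon>0$ with $B_{\varepsilon}^{Y}(y)\subseteq M$; taking the supremum over the ball turns the defining inequalities into the strict uniform \emph{outward sign condition} $\langle f(x)-y,g(x)\rangle\ge\varepsilon\|g(x)\|_{Y^{\ast}}=\varepsilon r_{0}>0$ for every $x\in S_{r_{0}}^{X}(0)$. This is exactly the boundary condition needed to force a topological degree to be nonzero. I would then set up a Galerkin scheme: pick finite–dimensional subspaces $X_{n}\uparrow X$ with dense union, and, using reflexivity and the strict convexity of $Y$ and $Y^{\ast}$ (so that the duality map $J_{Y}$ is a single-valued homeomorphism), build continuous finite–dimensional maps $\Psi_{n}$ on $\overline{B_{r_{0}}^{X}(0)}\cap X_{n}$ whose zeros are Galerkin solutions of $f(x)=y$ and whose behaviour on the sphere is controlled, through the pairing with $g$, by the sign condition above. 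The sign condition prevents $\Psi_{n}$ from pointing inward on the boundary sphere, so Brouwer's theorem (equivalently, a nonzero-degree argument) yields $x_{n}\in\overline{B_{r_{0}}^{X}(0)}\cap X_{n}$ with $\Psi_{n}(x_{n})=0$.

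The passage to the limit is where I expect the main obstacle to sit. By (i) the sequence $\{f(x_{n})\}$ is bounded and $\{x_{n}\}\subseteq\overline{B_{r_{0}}^{X}(0)}$ is bounded, so reflexivity lets me extract $x_{n}\rightharpoonup x_{\ast}$ and $f(x_{n})\rightharpoonup\chi$ along a subsequence, and the Galerkin identities pass to the limit to give $\langle\chi-y,w\rangle=0$ for $w$ in the dense set $g(S_{r_{0}}^{X}(0))$, whence $\chi=y$. The difficulty is that $f$ is only \emph{continuous}: one cannot in general identify $\chi=\lim f(x_{n})$ with $f(x_{\ast})$, because continuity does not upgrade weak convergence of $x_{n}$ to convergence of $f(x_{n})$. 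Consequently the construction only places $y$ in the closure of $f\bigl(B_{r_{0}}^{X}(0)\bigr)$, which is precisely why at this level of hypotheses one obtains an \emph{everywhere dense} subset of $M$ in the image rather than all of $M$; letting $y$ range over $\mathrm{int}\,M$ (dense in the convex body $M$) gives the first assertion.

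Finally, the two supplementary hypotheses are designed to remove exactly this gap. If $f\bigl(B_{r_{0}}^{X}(0)\bigr)$ is assumed closed, then the dense subset $D\subseteq M$ with $D\subseteq f\bigl(B_{r_{0}}^{X}(0)\bigr)$ satisfies $\overline{D}=M$ (as $M$ is closed), so $M=\overline{D}\subseteq f\bigl(B_{r_{0}}^{X}(0)\bigr)$, and since $M$ is bodily the image contains the bodily set $M$. If instead (iii) holds, it supplies the missing compactness directly: if $f(x_{n})\to y$ then $\{f(x_{n})\}$ is Cauchy, and applying (2.2) on a common neighborhood $V_{\varepsilon}(\widetilde{x})$ together with the property $\Phi(\tau,\widetilde{x},\varepsilon)=0\Leftrightarrow\tau=0$ forces $\{x_{n}\}$ to be Cauchy, hence $x_{n}\to x\in\overline{B_{r_{0}}^{X}(0)}$ and $f(x)=y$ by continuity; thus the image is closed and the previous argument applies. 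In either case $M\subseteq f\bigl(B_{r_{0}}^{X}(0)\bigr)$, which is the claimed bodily subset.
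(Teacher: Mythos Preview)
The paper does not actually supply a proof of Theorem~1: immediately after the statement it says only ``The proof of this theorem is obtained from general result that was proven in [14] (see also, [1]),'' together with a footnote referring to an earlier 1999 paper of the author. There is therefore no in-paper argument to compare yours against; the result is imported wholesale from the author's prior work, and any comparison would have to be with those external references rather than with this article.

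On its own merits, your outline follows a recognizable strategy for results of this type (an acute-angle/sign condition on the boundary sphere, a finite-dimensional Brouwer reduction, then a limit), and your geometric verification that $B_{\delta_0}^{Y}(0)\subseteq M$, hence that $M$ is bodily, is correct and clean. Two steps, however, are not yet closed. First, your Galerkin construction yields only $f(x_n)\rightharpoonup y$ weakly in $Y$; this places $y$ in the \emph{weak} closure of $f(B_{r_0}^{X}(0))$, but since that image has no convexity, weak closure can strictly exceed norm closure, and the theorem asserts a subset that is \emph{everywhere} (i.e.\ norm) dense in $M$. Some additional argument---for instance working on the $Y$-side via the duality map $J_Y$, whose single-valuedness and bijectivity are precisely what the strict convexity hypotheses on $Y$ and $Y^{*}$ provide---is needed to upgrade this. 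Second, condition~(iii) is genuinely local: inequality~(2.2) is asserted only for $x_1,x_2$ in a neighborhood $V_\varepsilon(\widetilde{x})$ of a given $\widetilde{x}$. To infer from $\{f(x_n)\}$ Cauchy that $\{x_n\}$ is Cauchy you must first trap the tail of $\{x_n\}$ inside one such neighborhood, and weak convergence of $x_n$ alone does not accomplish that. These are the delicate points one would expect [14] and [1] to address; as your sketch stands they are genuine gaps rather than routine omissions.
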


The proof of this theorem is obtained from general result that was proven in
[14] (see also, [1]).\footnote{%
We note that Theorem 1 is a generalization of Theorems of such type from
\par
Soltanov, K.N.: On equations with continuous mappings in Banach spaces.
Funct. Anal. Appl. 33, (1999) 1, 76-81.}

\begin{remark}
\textbf{1. }It is easy to see that the condition $B_{r_{0}}^{X}\left(
0\right) \subseteq D\left( f\right) $ is not essential, because if $D\left(
f\right) $ comprises a bounded closed subset $U\left( x_{0}\right) \subseteq
X$ of some element $x_{0}\in D(f)$ such that $U\left( x_{0}\right) $\ is
topologically equivalent to $B_{1}^{X}\left( 0\right) $ and $U\left(
x_{0}\right) \subseteq D(f)\cap D\left( g\right) $, then we can formulate
the conditions and statement of this theorem analogously, i.e. for this we
determine the operator $\widetilde{f}\left( x\right) =f\left( x\right)
-f\left( x_{0}\right) $ and assume that 
\begin{equation*}
\left\Vert \widetilde{f}\left( x\right) \right\Vert _{X^{\ast }}\leq \mu
\left( \left\Vert x-x_{0}\right\Vert _{X}\right) ,
\end{equation*}%
holds for any $x\in U\left( x_{0}\right) $ and 
\begin{equation}
\left\langle \widetilde{f}\left( x\right) ,g\left( x-x_{0}\right)
\right\rangle \geq \nu \left( \left\Vert x-x_{0}\right\Vert _{X}\right)
\left\Vert x-x_{0}\right\Vert _{X},  \tag{2.1'}
\end{equation}%
holds for almost all $x\in U\left( x_{0}\right) $. Moreover $\nu \left(
\left\Vert x-x_{0}\right\Vert _{X}\right) \geq \delta _{0}>0$ for any $x\in
\partial U\left( x_{0}\right) $, where $g:D\left( g\right) \subseteq
X\longrightarrow Y^{\ast }$ such that $D\left( f\right) \subseteq D\left(
g\right) $ and $g$\ satisfies a claim analogously of the condition (ii)
respect to $U\left( x_{0}\right) $. In this case, we define subset $%
\widetilde{M}_{x_{0}}$ in the form 
\begin{equation}
\widetilde{M}_{x_{0}}\equiv \left\{ y\in Y\left\vert \ 0\leq \left\langle
f\left( x\right) -y,g\left( x-x_{0}\right) \right\rangle ,\right. \forall
x\in \partial U\left( x_{0}\right) \right\} .  \tag{2.2'}
\end{equation}

\textbf{2.} In the formulation of Theorem 1 we use the equality $\left\Vert
g\left( x\right) \right\Vert _{Y^{\ast }}\equiv \left\Vert x\right\Vert _{X}$
that can be determined by the known way, i.e. $g%
{\acute{}}%
\left( x\right) \equiv \frac{\left\Vert x\right\Vert _{X}}{\left\Vert
g\left( x\right) \right\Vert _{Y^{\ast }}}$ $g\left( x\right) $ for any $%
x\in D\left( g\right) \subseteq X$.
\end{remark}

Condition (\textit{iii}) of Theorem 1 can be generalized, for example, as in
the following proposition.

\begin{corollary}
Let all conditions of Theorem 1 be fulfilled except inequality (2.2), and
instead of that, let the following inequality 
\begin{equation}
\left\Vert f\left( x_{2}\right) -f\left( x_{1}\right) \right\Vert _{Y}\geq
\Phi \left( \left\Vert x_{2}-x_{1}\right\Vert _{X},\widetilde{x},\varepsilon
\right) +\psi \left( \left\Vert x_{1}-x_{2}\right\Vert _{Z},\widetilde{x}%
,\varepsilon \right) ,  \tag{2.3}
\end{equation}%
be fulfilled for any $x_{1},x_{2}\in V_{\varepsilon }\left( \widetilde{x}%
\right) \cap B_{r_{0}}^{X}\left( 0\right) $, where $\Phi \left( \tau ,%
\widetilde{x},\varepsilon \right) $ is a function such as in the condition
(iii), $Z$ is a Banach space and the inclusion $X\subset Z$ is compact, and $%
\psi \left( \cdot ,\widetilde{x},\varepsilon \right)
:R_{+}^{1}\longrightarrow R^{1}$ is a continuous function at $\tau $ and $%
\psi \left( 0,\widetilde{x},\varepsilon \right) =0$ . Then the statement of
Theorem 1 is correct.
\end{corollary}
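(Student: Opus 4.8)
The plan is to follow the proof of Theorem 1 essentially verbatim, changing only the single step in which condition (iii) is invoked. Recall that conditions (i) and (ii) alone already force $f\left( B_{r_{0}}^{X}\left( 0\right) \right)$ to contain an everywhere dense subset of $M$; condition (iii) enters only to upgrade this to the bodily conclusion, by guaranteeing that the image is (locally) closed. Since $M$ is defined through (i) and (ii) alone, its description is untouched by replacing (2.2) with (2.3), so it suffices to re-establish the closedness of the image under the weaker hypothesis (2.3) and then quote the remainder of the proof of Theorem 1 unchanged.

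To re-establish closedness I would argue locally on $V_{\varepsilon }\left( \widetilde{x}\right) \cap B_{r_{0}}^{X}\left( 0\right)$. Take a sequence $y_{n}=f\left( x_{n}\right)$ with $y_{n}\to y$ in $Y$ and $x_{n}\in V_{\varepsilon }\left( \widetilde{x}\right) \cap B_{r_{0}}^{X}\left( 0\right)$. Since $\left\{ y_{n}\right\}$ converges it is Cauchy, hence $\left\Vert f\left( x_{n}\right) -f\left( x_{m}\right) \right\Vert _{Y}\to 0$, and (2.3) yields
\[
\Phi \left( \left\Vert x_{n}-x_{m}\right\Vert _{X},\widetilde{x},\varepsilon \right) +\psi \left( \left\Vert x_{n}-x_{m}\right\Vert _{Z},\widetilde{x},\varepsilon \right) \leq \left\Vert f\left( x_{n}\right) -f\left( x_{m}\right) \right\Vert _{Y}.
\]
The new ingredient is the compact inclusion $X\subset Z$: the sequence $\left\{ x_{n}\right\}$ is bounded in $X$ (it lies in $B_{r_{0}}^{X}\left( 0\right)$), hence relatively compact in $Z$; passing to a subsequence, $x_{n}\to \bar{x}$ in $Z$, so $\left\Vert x_{n}-x_{m}\right\Vert _{Z}\to 0$, and by continuity of $\psi \left( \cdot ,\widetilde{x},\varepsilon \right)$ at $0$ together with $\psi \left( 0,\widetilde{x},\varepsilon \right) =0$ we obtain $\psi \left( \left\Vert x_{n}-x_{m}\right\Vert _{Z},\widetilde{x},\varepsilon \right) \to 0$. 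Consequently $\Phi \left( \left\Vert x_{n}-x_{m}\right\Vert _{X},\widetilde{x},\varepsilon \right) \to 0$. Because $\Phi \left( \cdot ,\widetilde{x},\varepsilon \right)$ is continuous, nonnegative, and vanishes only at $\tau =0$, it is bounded below by a positive constant on every interval $\left[ \rho ,2r_{0}\right]$ with $\rho >0$; hence $\Phi \to 0$ forces $\left\Vert x_{n}-x_{m}\right\Vert _{X}\to 0$. Thus $\left\{ x_{n}\right\}$ is Cauchy in $X$, converges strongly to some $x^{\ast }\in B_{r_{0}}^{X}\left( 0\right)$, and continuity of $f$ gives $y=f\left( x^{\ast }\right) \in f\left( B_{r_{0}}^{X}\left( 0\right) \right)$. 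This is exactly the closedness property that (iii) supplied in Theorem 1, so the rest of that proof carries over and the statement follows.

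The main obstacle to watch is the sign of $\psi$: since $\psi$ is only assumed $R^{1}$-valued, the added term may be negative, so (2.3) is a priori \emph{weaker} than (2.2) and the term cannot simply be discarded. The entire content of the argument is that the compact embedding compensates for this defect: it is precisely what turns the $\psi$-contribution along the sequence into a null term, allowing the coercive modulus $\Phi$ to dominate again and yield strong $X$-convergence. I would take care that the strong $X$-convergence is extracted along the same (relabeled) subsequence on which the $Z$-convergence was obtained, and that the limit lies in the ball because $B_{r_{0}}^{X}\left( 0\right)$ is closed (indeed weakly closed, by reflexivity of $X$).
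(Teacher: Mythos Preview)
Your proof is correct. The paper does not supply an explicit proof of the Corollary---it is stated without argument immediately after Theorem~1, whose own proof is deferred to references [1] and [14]---but your reconstruction is exactly the intended mechanism, and in fact coincides with how the paper argues in its concrete applications: compare the proof of Theorem~4, inequality~(4.9), where the compact embedding $W^{2,2}\cap W_0^{1,2}\hookrightarrow W^{1,p}$ plays the role of $X\subset Z$, the lower-order $F_0$, $F_1$, $\widetilde F_2$ terms play the role of the $\psi$-contribution and are killed by compactness, and the remaining principal part forces strong convergence of the preimages.
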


{\large Note}\textbf{.} \textit{It should be noted that this result is a
generalization of the known Lax-Milgram theorem to the nonlinear case in the
class of Banach spaces when all conditions of this theorem are fulfilled on
whole space. Indeed, we can formulate the Lax-Milgram theorem for the linear
operator }$T$\textit{\ acting on the real Hilbert space }$X$\textit{\ in the
form: \ \ \ }

\textit{a) There exists a positive constant }$\gamma $\textit{\ such that }%
\begin{equation*}
\left\vert \left( Tx,y\right) \right\vert \leq \gamma \left\Vert
x\right\Vert _{X}\cdot \left\Vert y\right\Vert _{X},\quad \forall \left(
x,y\right) \in X\times X\ ;
\end{equation*}%
\textit{that is equivalent to boundedness of operator }$T:X\longrightarrow X$%
\textit{.}

\textit{b) there exists a positive constant }$\delta $\textit{\ such that }%
\begin{equation*}
\left( Tx,x\right) \geq \delta \left\Vert x\right\Vert _{X}^{2}\ ;
\end{equation*}%
\textit{that is equivalent to the coerciveness of }$T:X\longrightarrow X$,
i.e. $T$ \textit{satisfies the condition (ii) for the special case (}$%
g\equiv id$\textit{).}

\textit{Then equation }$Tx=y$\textit{\ is solvable for any }$y\in X.$\textit{%
\ \ }

\textit{From the condition b, it follows that }%
\begin{equation*}
\left\Vert T\left( x_{1}-x_{2}\right) \right\Vert _{X}\geq \delta \left\Vert
x_{1}-x_{2}\right\Vert _{X}\ ;
\end{equation*}%
\textit{i.e. inequality (2.2) holds for any }$x_{1},x_{2}\in X$\textit{.}

From Theorem 1 it immediately follows that

\begin{theorem}
(\textbf{Fixed-Point Theorem}). Let $X$ be a reflexive separable Banach
space and $f_{1}:D\left( f_{1}\right) \subseteq X\longrightarrow X$ be a
bounded continuous operator. Moreover, let on a closed ball $%
B_{r_{0}}^{X}\left( x_{0}\right) \subseteq D\left( f_{1}\right) $, where $%
x_{0}\in D\left( f_{1}\right) $, operator $f\equiv Id-f_{1}$ satisfy the
following conditions 
\begin{equation*}
\left\Vert f_{1}\left( x\right) -f_{1}\left( x_{0}\right) \right\Vert
_{X}\leq \mu \left( \left\Vert x-x_{0}\right\Vert _{X}\right) ,\quad \
\forall x\in B_{r_{0}}^{X}\left( x_{0}\right) ,
\end{equation*}%
\begin{equation}
\left\langle f\left( x\right) -f\left( x_{0}\right) ,g\left( x-x_{0}\right)
\right\rangle \geq \nu \left( \left\Vert x-x_{0}\right\Vert _{X}\right)
\left\Vert x-x_{0}\right\Vert _{X},\quad \forall x\in B_{r_{0}}^{X}\left(
x_{0}\right) ,  \tag{2.4}
\end{equation}%
and almost each $\widetilde{x}\in intB_{r_{0}}^{X}\left( x_{0}\right) $
possesses a neighborhood $V_{\varepsilon }\left( \widetilde{x}\right) $, $%
\varepsilon \geq \varepsilon _{0}>0$ such that the following inequality%
\begin{equation*}
\left\Vert f\left( x_{2}\right) -f\left( x_{1}\right) \right\Vert _{X}\geq
\varphi \left( \left\Vert x_{2}-x_{1}\right\Vert _{X},\widetilde{x}%
,\varepsilon \right) ,
\end{equation*}%
holds for any $x_{1},x_{2}\in V_{\varepsilon }\left( \widetilde{x}\right)
\cap B_{r_{0}}^{X}\left( x_{0}\right) $, where $g:D\left( g\right) \subseteq
X\longrightarrow X^{\ast }$ such that $B_{r_{0}}^{X}\left( 0\right)
\subseteq D\left( g\right) $ and $g$ satisfies condition (ii), $\mu $ and $%
\nu $ are such functions as in Theorem 1, function $\varphi \left( \tau ,%
\widetilde{x},\varepsilon \right) $ has such a form as the right hand side
of inequality (2.3) (in particular, $g\equiv J:X\rightleftarrows X^{\ast }$,
i.e. $g$ be a duality mapping). Then the operator $f_{1}$ possesses a
fixed-point on the ball $B_{r_{0}}^{X}\left( x_{0}\right) $.
\end{theorem}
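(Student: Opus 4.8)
The goal is to produce a fixed point of $f_1$ on $B_{r_0}^X(x_0)$, i.e.\ a point $x$ with $f_1(x)=x$, which is equivalent to solving $f(x)=0$ for $f=\mathrm{Id}-f_1$. The plan is to apply Theorem 1 to the operator $f$ with $Y=X$, recentering everything at $x_0$ exactly as in Remark 1, and then to verify that the origin (the zero of $Y=X$) lands in the bodily image $f(B_{r_0}^X(x_0))$. First I would check the hypotheses of Theorem 1 for $f$ on the ball $U(x_0)=B_{r_0}^X(x_0)$: condition (i) is immediate because $f_1$ is bounded and continuous, so $f=\mathrm{Id}-f_1$ is continuous and bounded on the ball; condition (ii) is precisely the coercivity inequality (2.4), with the stated mapping $g$ (for instance a duality map $J$) furnishing the required pairing and the boundary estimate $\nu\ge\delta_0>0$; and the local injectivity-type estimate (iii), in its generalized form from the Corollary, is exactly the third displayed hypothesis involving $\varphi$.

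Once these are verified, Theorem 1 (together with the Corollary, which lets me replace (2.2) by (2.3)) guarantees that $f(B_{r_0}^X(x_0))$ is a bodily subset of $X$ containing the set
\begin{equation*}
\widetilde{M}_{x_0}\equiv\left\{y\in X\left\vert\ 0\le\left\langle f(x)-y,g(x-x_0)\right\rangle,\ \forall x\in\partial B_{r_0}^X(x_0)\right.\right\}
\end{equation*}
from Remark 1. The decisive step is therefore to show $0\in\widetilde{M}_{x_0}$, that is, $\left\langle f(x),g(x-x_0)\right\rangle\ge0$ for every $x$ on the sphere $\partial B_{r_0}^X(x_0)$. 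This is where the coercivity condition (2.4) does its work: on the boundary $\|x-x_0\|_X=r_0$, and since $\nu(r_0)\ge\delta_0>0$, inequality (2.4) gives
\begin{equation*}
\left\langle f(x)-f(x_0),g(x-x_0)\right\rangle\ge\nu(r_0)\,r_0\ge\delta_0 r_0>0,
\end{equation*}
so after accounting for the recentering one obtains the sign needed to place $0$ in $\widetilde{M}_{x_0}$, hence in the interior of the image. Since $0$ lies in the bodily (solid) image of $f$, there exists $x^\ast\in B_{r_0}^X(x_0)$ with $f(x^\ast)=0$, i.e.\ $f_1(x^\ast)=x^\ast$, the desired fixed point.

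The main obstacle I anticipate is not the abstract application of Theorem 1 but the bookkeeping around the recentering and the precise pairing used in $\widetilde{M}_{x_0}$: one must be careful that the functional $\langle\,\cdot\,,g(x-x_0)\rangle$ is evaluated consistently, that the normalization $\|g(x)\|_{Y^\ast}\equiv\|x\|_X$ from Remark 1.2 is in force so that the coercivity estimate cleanly controls the boundary values, and that the separability and reflexivity of $X$ are genuinely what is needed to invoke the density-plus-bodily conclusion of Theorem 1 in the self-map setting $Y=X$. Provided the membership $0\in\widetilde{M}_{x_0}$ is established rigorously on the full sphere, the existence of the fixed point follows at once.
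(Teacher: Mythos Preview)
Your approach coincides with the paper's: the text supplies no argument beyond the single line ``From Theorem~1 it immediately follows that'' preceding the statement, so recentering via Remark~1, verifying (i)--(iii) for $f=\mathrm{Id}-f_{1}$ with $Y=X$, and then placing $0$ in the set $\widetilde{M}_{x_{0}}$ is exactly the intended route, and your write-up is already more detailed than what the paper provides. The recentering bookkeeping you single out as the delicate point---passing from $\langle f(x)-f(x_{0}),g(x-x_{0})\rangle\ge\delta_{0}r_{0}$ to $\langle f(x),g(x-x_{0})\rangle\ge 0$ on the sphere---is likewise not spelled out in the paper.
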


\begin{definition}
We call that an operator $f:D\left( f\right) \subseteq X\longrightarrow Y$
possesses the \textrm{P-property} iff any precompact subset $M$ of $Y$ from $%
\func{Im}\ f$ \ has a (general) subsequence $M_{0}\subset M$ such that there
exists a precompact subset $G$ of $X$ that satisfies the inclusions $%
f^{-1}\left( M_{0}\right) \subseteq G$ and $f\left( G\cap D\left( f\right)
\right) \supseteq M_{0}$.
\end{definition}

\begin{notation}
We can take the following condition instead of condition (iii) of Theorem 1: 
$f$ possesses the \textrm{P-property} on the ball $B_{r_{0}}^{X}\left(
0\right) $. It should be noted that an operator $f:D\left( f\right)
\subseteq X\longrightarrow Y$ possesses of the \textrm{P-property }if $%
f^{-1} $ is a lower or upper semi-continuous mapping.
\end{notation}

In the above results for the completeness of the image ($\func{Im}\ f$) of
the imposed operator $f,$ the condition \textit{(iii)} and \textrm{P-property%
} (and also the generalizations of the conditions \textit{(iii)}) are used%
\textit{.} But there are some other types of the complementary conditions on 
$f$ under which $\func{Im}\ f$ will be a closed subset. These types of
conditions are described in [1, 12, 14]. Therefore we do not conduct them
here again.

\section{General Result on Solvability of Perturbed Equation}

Now, we lead a solvability theorem for the perturbed nonlinear equation in
the Banach spaces, proved by using Theorem 1 and Corollary 1.

Let $X,Y$ be reflexive Banach spaces and $X^{\ast },Y^{\ast }$ be their dual
spaces, let $F:D\left( F\right) \subseteq X$ $\longrightarrow Y$ be a
nonlinear operator that has the representation $F\left( x\right) \equiv
F_{0}\left( x\right) +F_{1}\left( x\right) $ for any $x\in D\left( F\right) $%
, where $F_{i}:D\left( F_{i}\right) \subseteq X$ $\longrightarrow Y$ , $%
i=0,1 $ are some operators such that $D\left( F_{0}\right) \cap D\left(
F_{1}\right) \equiv G\subseteq X$ and $G\neq \varnothing $.

Consider the following equation 
\begin{equation}
F\left( x\right) \equiv F_{0}\left( x\right) +F_{1}\left( x\right) =y,\quad
y\in Y\ ,  \tag{3.1}
\end{equation}%
where $y$ is an element of $Y$.

Let $B_{r}^{X}\left( 0\right) \subseteq G\subseteq X$ be a closed ball, $r>0$
be a number. We set the following conditions

1) $F_{0}:B_{r}^{X}\left( 0\right) $ $\longrightarrow Y$ is the continuous
operator with its inverse operator $F_{0}^{-1}$, (as $F_{0}^{-1}:$ $D\left(
F_{0}^{-1}\right) \subseteq $ $Y$ $\longrightarrow $ $X$);

2) $F_{1}:B_{r}^{X}\left( 0\right) $ $\longrightarrow Y$ is a nonlinear
continuous operator;

3) there are such continuous functions $\mu _{i}:R_{+}^{1}\longrightarrow
R_{+}^{1}$ , $i=1,2$ and $\nu :R_{+}^{1}\longrightarrow R^{1}$ that the
following inequalities 
\begin{equation*}
\left\Vert F_{0}\left( x\right) \right\Vert _{Y}\leq \mu _{1}\left(
\left\Vert x\right\Vert _{X}\right) \ \&\ \left\Vert F_{1}\left( x\right)
\right\Vert _{Y}\leq \mu _{2}\left( \left\Vert x\right\Vert _{X}\right) ,
\end{equation*}%
\begin{equation*}
\left\langle F_{0}\left( x\right) +F_{1}\left( x\right) ,g\left( x\right)
\right\rangle \geq c\left\langle F_{0}\left( x\right) ,g\left( x\right)
\right\rangle \geq \nu \left( \left\Vert x\right\Vert _{X}\right) \left\Vert
x\right\Vert _{X},
\end{equation*}%
hold for any $x\in B_{r}^{X}\left( 0\right) $, moreover $\nu \left( r\right)
\geq $ $\delta _{0}$ holds for some number $\delta _{0}>0$, where the
mapping $g:B_{r}^{X}\left( 0\right) \subseteq D\left( g\right) \subseteq
X\longrightarrow Y^{\ast }$ fulfills the conditions of Theorem 1, where $c>0$
is a constant;

4) almost each $\widetilde{x}\in intB_{r}^{X}\left( 0\right) $ possesses a
neighborhood $B_{\varepsilon }^{X}\left( \widetilde{x}\right) $, $%
\varepsilon \geq \varepsilon _{0}>0$, such that the following inequality 
\begin{equation*}
\left\Vert F\left( x_{1}\right) -F\left( x_{2}\right) \right\Vert _{Y}\geq
c_{1}\left( \widetilde{x},\varepsilon \right) \left\Vert F_{0}\left(
x_{1}\right) -F_{0}\left( x_{2}\right) \right\Vert _{Y},
\end{equation*}%
holds for any $x_{1},x_{2}\in B_{\varepsilon }^{X}\left( \widetilde{x}%
\right) $ and some number $\varepsilon _{0}>0$, where $c_{1}\left(
\left\Vert \widetilde{x}\right\Vert _{X},\varepsilon \right) >0$ is bounded
for each $\widetilde{x}\in intB_{r}^{X}\left( 0\right) $;

or

4') operator $F:B_{r}^{X}\left( 0\right) \subset X$ $\longrightarrow Y$
possesses \textrm{P-property}, except for the existence of the inverse
operator $F_{0}^{-1}$ in condition 1.

Then the following statement is true by Theorem 1.

\begin{theorem}
Let conditions 1, 2, 3, 4 (or 1, 2, 3, 4') be fulfilled then equation (3.1)
has a solution in the ball $B_{r}^{X}\left( 0\right) $ for any $y\in Y$ that
fulfills the following inequality 
\begin{equation*}
\left\langle y,g\left( x\right) \right\rangle \leq \nu \left( \left\Vert
x\right\Vert _{X}\right) \left\Vert x\right\Vert _{X},\quad \forall x\in
S_{r}^{X}\left( 0\right) .
\end{equation*}
\end{theorem}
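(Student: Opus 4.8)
The plan is to apply Theorem 1 (together with Corollary 1, when needed) to the single operator $f\equiv F=F_{0}+F_{1}$ on the closed ball $B_{r}^{X}\left( 0\right) $, and then to recognize the hypothesis imposed on $y$ as the statement that $y$ belongs to the set $M$ produced by that theorem. First I would check conditions (i) and (ii) of Theorem 1 for $f=F$. Boundedness and continuity (condition (i)) follow immediately from conditions 1), 2) and 3): the triangle inequality gives $\left\Vert F\left( x\right) \right\Vert _{Y}\leq \mu _{1}\left( \left\Vert x\right\Vert _{X}\right) +\mu _{2}\left( \left\Vert x\right\Vert _{X}\right) =:\mu \left( \left\Vert x\right\Vert _{X}\right) $, while $F$ is continuous as a sum of the continuous operators $F_{0}$ and $F_{1}$. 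The coercivity estimate (2.1) of condition (ii) is exactly the second chain in condition 3), namely $\left\langle F\left( x\right) ,g\left( x\right) \right\rangle \geq c\left\langle F_{0}\left( x\right) ,g\left( x\right) \right\rangle \geq \nu \left( \left\Vert x\right\Vert _{X}\right) \left\Vert x\right\Vert _{X}$ with $\nu \left( r\right) \geq \delta _{0}>0$, and $g$ is assumed to satisfy the requirements of Theorem 1. Consequently the first conclusion of Theorem 1 already yields that $F\left( B_{r}^{X}\left( 0\right) \right) $ contains an everywhere dense subset of $M=\{y\in Y:\left\langle y,g\left( x\right) \right\rangle \leq \left\langle F\left( x\right) ,g\left( x\right) \right\rangle ,\ \forall x\in S_{r}^{X}\left( 0\right) \}$.

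Next I would upgrade this density to the bodily (solid) inclusion, which is where condition 4) or condition 4') enters. In the case of conditions 1)--4), the task is to convert condition 4) into an estimate of the type (2.2). Condition 4) supplies the local bound $\left\Vert F\left( x_{1}\right) -F\left( x_{2}\right) \right\Vert _{Y}\geq c_{1}\left( \widetilde{x},\varepsilon \right) \left\Vert F_{0}\left( x_{1}\right) -F_{0}\left( x_{2}\right) \right\Vert _{Y}$ on each neighborhood $B_{\varepsilon }^{X}\left( \widetilde{x}\right) $, so it suffices to bound $\left\Vert F_{0}\left( x_{1}\right) -F_{0}\left( x_{2}\right) \right\Vert _{Y}$ from below by a function of $\left\Vert x_{1}-x_{2}\right\Vert _{X}$ that vanishes only at the origin. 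Here the existence of the continuous inverse $F_{0}^{-1}$ in condition 1) is used: writing $y_{i}=F_{0}\left( x_{i}\right) $, so that $x_{i}=F_{0}^{-1}\left( y_{i}\right) $, the continuity of $F_{0}^{-1}$ near $F_{0}\left( \widetilde{x}\right) $ yields a modulus of continuity $\omega $ (nondecreasing, $\omega \left( 0\right) =0$) with $\left\Vert x_{1}-x_{2}\right\Vert _{X}\leq \omega \left( \left\Vert F_{0}\left( x_{1}\right) -F_{0}\left( x_{2}\right) \right\Vert _{Y}\right) $ on a neighborhood of $\widetilde{x}$. Inverting $\omega $ produces $\left\Vert F_{0}\left( x_{1}\right) -F_{0}\left( x_{2}\right) \right\Vert _{Y}\geq \omega ^{-1}\left( \left\Vert x_{1}-x_{2}\right\Vert _{X}\right) =:\Phi _{0}\left( \left\Vert x_{1}-x_{2}\right\Vert _{X}\right) $, where $\Phi _{0}$ is continuous in $\tau $ and $\Phi _{0}\left( \tau \right) =0\Leftrightarrow \tau =0$; setting $\Phi \left( \tau ,\widetilde{x},\varepsilon \right) =c_{1}\left( \widetilde{x},\varepsilon \right) \Phi _{0}\left( \tau \right) $ then furnishes (2.2). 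In the alternative case of conditions 1), 2), 3), 4'), this entire step is bypassed: condition 4' is precisely the \textrm{P-property}, which by the Notation may be substituted for condition (iii) directly. In either situation Theorem 1 gives that $F\left( B_{r}^{X}\left( 0\right) \right) $ is a bodily subset of $Y$ that contains $M$.

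Finally I would match the hypothesis on $y$ with the description of $M$. By the coercivity in condition 3), for every $x\in S_{r}^{X}\left( 0\right) $ we have $\left\langle F\left( x\right) ,g\left( x\right) \right\rangle \geq \nu \left( \left\Vert x\right\Vert _{X}\right) \left\Vert x\right\Vert _{X}$. Hence any $y$ satisfying $\left\langle y,g\left( x\right) \right\rangle \leq \nu \left( \left\Vert x\right\Vert _{X}\right) \left\Vert x\right\Vert _{X}$ for all $x\in S_{r}^{X}\left( 0\right) $ also satisfies $\left\langle y,g\left( x\right) \right\rangle \leq \left\langle F\left( x\right) ,g\left( x\right) \right\rangle $ for all such $x$, that is, $y\in M\subseteq F\left( B_{r}^{X}\left( 0\right) \right) $. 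Therefore there exists $x\in B_{r}^{X}\left( 0\right) $ with $F\left( x\right) =y$, which is the asserted solvability of (3.1).

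I expect the main obstacle to be the construction of $\Phi _{0}$ from the continuous invertibility of $F_{0}$: one must ensure that $F_{0}^{-1}$ is (at least locally uniformly) continuous on the relevant image so that $\omega ^{-1}$ exists as a genuine nondecreasing function vanishing only at the origin, and that the local constant $c_{1}\left( \widetilde{x},\varepsilon \right) $ stays bounded away from $0$ on each neighborhood, as stipulated. Should a clean inverse modulus fail to exist globally, one localizes on each $B_{\varepsilon }^{X}\left( \widetilde{x}\right) $ as in condition (iii), or else relies entirely on the \textrm{P-property} route via condition 4'; the rest of the argument is a routine verification of the hypotheses of Theorem 1.
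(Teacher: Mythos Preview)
Your proposal is correct and follows exactly the route the paper takes: the paper offers no argument beyond the single sentence ``the following statement is true by Theorem 1,'' and you have simply spelled out the verification of conditions (i), (ii), (iii) (or the \textrm{P-property} alternative via the Notation) for $f=F_{0}+F_{1}$ and identified the set $M$ with the admissible $y$'s. Your detailed construction of $\Phi$ from condition 4) via the inverse $F_{0}^{-1}$, and your caveat about needing local uniform continuity of $F_{0}^{-1}$, go well beyond what the paper records, but they are the natural (and only) way to make the cited implication precise.
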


\section{Fully Nonlinear Equations of Second Order}

Now, we study some nonlinear BVP with using the general results. Let $\Omega
\subset R^{n}$ ($n\geq 1$) be an open bounded domain with sufficiently
smooth boundary $\partial \Omega $. Consider the following problem 
\begin{equation}
f\left( u\right) \equiv -\Delta u+F\left( x,u,Du,\Delta u\right) =0,\quad
x\in \Omega ,\quad u\left\vert ~_{\partial \Omega }\right. =0,  \tag{4.1}
\end{equation}%
where $F\left( x,\xi ,\eta ,\zeta \right) $ is a Caratheodory function on $%
\Omega \times R^{2}\times R^{n}$ as $F:\Omega \times R^{2}\times
R^{n}\longrightarrow R^{1}$, $D\equiv \left( D_{1},D_{2},\ldots
,D_{n}\right) $, $\Delta \equiv \overset{n}{\underset{i=1}{\sum }}D_{i}^{2}$
(is Laplacian), $D_{i}\equiv \frac{\partial }{\partial x_{i}}$.

Let the following conditions be fulfilled

(\textit{i}) there are Caratheodory functions $F_{0}\left( x,\xi \right) $ , 
$F_{1}\left( x,\xi ,\eta \right) $, $F_{2}\left( x,\xi ,\eta ,\zeta \right) $
: $F_{0},F_{1},F_{2}:\Omega \times R^{n+2}\longrightarrow R^{1}$ such that $%
F\left( x,\xi ,\eta ,\zeta \right) =F_{0}\left( x,\xi \right) +F_{1}\left(
x,\xi ,\eta \right) +F_{2}\left( x,\xi ,\eta ,\zeta \right) $ for any $%
\left( x,\xi ,\eta ,\zeta \right) \in \Omega \times R^{1}\times R^{n}\times
R^{1}$, moreover

(\textit{a}) there exist a Caratheodory function $a_{1}\left( x,\xi \right) $
and numbers $m_{0},\nu \geq 0$, $\mu ,M>0$, $\mu +2>2\nu $ such that 
\begin{equation*}
F_{0}\left( x,\xi \right) \equiv M\ \left\vert \xi \right\vert ^{\mu }\ \xi
+a_{1}\left( x,\xi \right) ,\quad
\end{equation*}%
\begin{equation}
\left\vert a_{1}\left( x,\xi \right) \right\vert \leq m_{0}\ \left\vert \xi
\right\vert ^{\nu }+\psi \left( x\right) ,\ \psi \in L_{p}\left( \Omega
\right) ,\ p>2\ ,  \tag{4.2}
\end{equation}%
hold for a.e. $x\in \Omega $ and any $\xi \in R^{1}$, and

(\textit{b}) there exist a number $\rho :$ $2\geq \rho \geq 0$ and a
nonnegative Caratheodory function $m_{1}\left( x,\xi ,\eta \right) \geq 0$
such that \ 
\begin{equation}
\left\vert F_{1}\left( x,\xi ,\eta \right) \right\vert \leq m_{1}\left(
x,\xi ,\eta \right) ~\left\vert \eta \right\vert ^{\rho }+k\left( x\right)
,\ \forall \left( x,\xi ,\eta \right) \in \Omega \times R^{1}\times R^{n}, 
\tag{4.3}
\end{equation}%
holds, where $m_{1}\left( x,\xi ,\eta \right) \leq M_{1}\left( x\right) $,
and $2\left( \widehat{C}\left( \mu ,\rho ,n\right) \left\Vert
M_{1}\right\Vert _{\infty }^{2}\right) ^{2}$ $\leq M$, $k\in L^{p_{1}}\left(
\Omega \right) ,\quad p_{1}>2$, where $\widehat{C}\left( \mu ,\rho ,n\right) 
$ is the coefficient of Gagliardo-Nirenberg-Sobolev (G-N-S) inequality (see,
[16]).\footnote{%
\begin{equation*}
\left\Vert D^{\beta }v\right\Vert _{p_{0}}\leq C\left(
p_{0},p_{1},p_{2},l,s,n\right) \times 
\end{equation*}%
\par
\begin{equation*}
\left[ \underset{\left\vert \alpha \right\vert \leq l}{\sum }\left\Vert
D^{\alpha }v\right\Vert _{p_{1}}\right] ^{\theta }\times \left\Vert
v\right\Vert _{p_{2}}^{1-\theta },
\end{equation*}%
\par
where $\theta \in \left[ \frac{\left\vert \beta \right\vert }{l},1\right] $
and satisfy the equation%
\begin{equation*}
\frac{n}{p_{0}}-s=\theta \left( \frac{n}{p_{1}}-l\right) +\left( 1-\theta
\right) \frac{n}{p_{2}},
\end{equation*}%
the constant $C\left( p_{0},p_{1},p_{2},l,s,n\right) $ is independent of $%
v\left( x\right) $; $p_{0},p_{1},p_{2}\geq 1$, $0\leq s<l$, $\left\vert
\beta \right\vert =\underset{i=1}{\overset{n}{\sum }}\beta _{i}=s$, $\beta
_{i}\geq 0$ be some numbers.}

c) there exist Carathedory functions $c\left( x,\xi ,\eta ,\zeta \right) $, $%
\widetilde{F}_{2}\left( x,\xi ,\eta \right) $ and a continuous function $%
k\left( \zeta \right) $ such that the following inequalities 
\begin{equation}
\left\vert F_{2}\left( x,\xi ,\eta ,\zeta \right) -F_{2}\left( x,\xi ,\eta
,\zeta _{1}\right) \right\vert \leq c\left( x,\xi ,\eta ,\zeta ,\zeta
_{1}\right) \ \left\vert \zeta -\zeta _{1}\right\vert ,  \tag{4.4}
\end{equation}%
\begin{equation}
\left\vert F_{2}\left( x,\xi ,\eta ,\zeta \right) -F_{2}\left( x,\xi
_{1},\eta _{1},\zeta \right) \right\vert \leq k\left( \zeta \right)
\left\vert \widetilde{F}_{2}\left( x,\xi ,\eta \right) -\widetilde{F}%
_{2}\left( x,\xi _{1},\eta _{1}\right) \right\vert ,  \tag{4.5}
\end{equation}%
hold for a.e. $x\in \Omega $ and any $\left( \xi ,\eta \right) ,\left( \xi
_{1},\eta _{1}\right) \in R^{n+1}$, $\forall \zeta ,\ \zeta _{1}\in R^{1}$,
and $F_{2}\left( x,\xi ,\eta ,0\right) =0$, moreover there exists a function 
$\psi _{1}\in L^{\infty }\left( \Omega \right) $ such that $\psi _{1}\left(
x\right) \geq 0$, $c\left( x,\xi ,\eta ,\zeta \right) \leq \psi _{1}\left(
x\right) $ hold for a.e. $x\in \Omega $ and any $\left( \xi ,\eta ,\zeta
\right) \in R^{N}$, here $\left\Vert \psi _{1}\right\Vert _{L^{\infty
}\left( \Omega \right) }\equiv \left\Vert \psi _{1}\right\Vert _{\infty
}\leq 4^{-1}$.

Assume the following denotations: $\left\Vert u\right\Vert _{L_{p}\left(
\Omega \right) }\equiv \left\Vert u\right\Vert _{p}$ for any $p\in \left[
1,\infty \right] $, and $\left\Vert u\right\Vert _{W^{l,p}\left( \Omega
\right) }\equiv \left\Vert u\right\Vert _{l,p}$, for $u\in W^{l,p}\left(
\Omega \right) $, $l\geq 1$.

So, we consider the operator $f:W^{2,2}\left( \Omega \right) \cap
W_{0}^{1,2}\left( \Omega \right) \longrightarrow L^{2}\left( \Omega \right) $%
, which is generated by the imposed problem.

\begin{theorem}
Let conditions (i), (a), (b), (c) be fulfilled and parameters $\mu $ and $%
\rho $ satisfy the following relations 
\begin{equation*}
1<\mu \leq \frac{4}{n-4}\ \text{if}\ n\geq 5\ \&\ 1<\mu <\infty \text{ }\ 
\text{if }\ n=2,3,4,
\end{equation*}%
\begin{equation*}
\rho \geq \frac{\left( \mu +2\right) n}{2\left( n+\mu \right) }\ \&\ \rho
\leq 1+\min \left\{ \frac{\mu +2}{n+\mu };\frac{2}{n-2};\frac{\mu }{\mu +2}%
\right\} .
\end{equation*}

Then problem (4.1) is solvable in $W^{2,2}\left( \Omega \right) \cap
W_{0}^{1,2}\left( \Omega \right) $.
\end{theorem}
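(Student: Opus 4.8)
The plan is to realize problem (4.1) as an abstract equation $f(u)=0$ with $f:X\longrightarrow Y$, where $X=W^{2,2}(\Omega)\cap W_{0}^{1,2}(\Omega)$ and $Y=L^{2}(\Omega)$, and to apply Theorem 1 together with Corollary 1. Since $Y$ is Hilbert we identify $Y^{\ast}=L^{2}(\Omega)$, equip $X$ with the equivalent norm $\|u\|_{X}=\|\Delta u\|_{2}$, and take for $g$ the linear isomorphism $g(u)=-\Delta u$, which by elliptic regularity maps $S_{r}^{X}(0)$ onto $S_{r}^{Y^{\ast}}(0)$ and hence satisfies (ii) in the linear form permitted by the footnote to (ii). It then suffices to verify (i), (ii) and (iii) on a ball $B_{r_{0}}^{X}(0)$. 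The payoff is automatic: the coercivity in (ii) yields $\langle f(u),g(u)\rangle\ge\nu(\|u\|_{X})\|u\|_{X}\ge\delta_{0}r_{0}>0$ on $S_{r_{0}}^{X}(0)$, so for $\|y\|_{2}<\delta_{0}$ one has $\langle y,g(u)\rangle\le\|y\|_{2}r_{0}<\langle f(u),g(u)\rangle$; thus $0$ lies in the interior of the set $M$ of Theorem 1, and once (iii) makes $M$ a bodily subset of $f(B_{r_{0}}^{X}(0))$ we conclude $0\in f(B_{r_{0}}^{X}(0))$, i.e. solvability.

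First I would check (i). The operator $-\Delta:X\to L^{2}$ is a linear isometry for the chosen norm, and $F_{0},F_{1},F_{2}$ are Nemytskii operators. The growth bounds (4.2)--(4.5), together with the Sobolev embeddings $W^{2,2}\hookrightarrow L^{2n/(n-4)}$ and $W^{2,2}\hookrightarrow W^{1,2}\hookrightarrow L^{2n/(n-2)}$, render them bounded and continuous into $L^{2}$. This is exactly where the exponent hypotheses enter: $u\mapsto|u|^{\mu}u\in L^{2}$ requires $2(\mu+1)\le 2n/(n-4)$, i.e. $\mu\le 4/(n-4)$ for $n\ge5$ (and no restriction for $n=2,3,4$), while $u\mapsto|\nabla u|^{\rho}\in L^{2}$ requires $2\rho\le 2n/(n-2)$, i.e. $\rho\le 1+2/(n-2)$; the term $F_{2}$ is controlled by $\tfrac14|\Delta u|\in L^{2}$ via (4.4) and $F_{2}(x,\xi,\eta,0)=0$. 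Hence $\|f(u)\|_{2}\le\mu(\|u\|_{X})$ on $B_{r_{0}}^{X}(0)$.

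The core of the argument is (ii), and it is the step I expect to be the main obstacle. Pairing with $g(u)=-\Delta u$ and integrating by parts (the boundary terms vanish by $u|_{\partial\Omega}=0$),
\[
\langle f(u),-\Delta u\rangle=\|\Delta u\|_{2}^{2}+M(\mu+1)\int_{\Omega}|u|^{\mu}|\nabla u|^{2}\,dx+\int_{\Omega}(a_{1}+F_{1}+F_{2})(-\Delta u)\,dx .
\]
The first two summands are nonnegative, the first being $\|u\|_{X}^{2}$ and the second equal to $\tfrac{4M(\mu+1)}{(\mu+2)^{2}}\|\nabla(|u|^{\mu/2}u)\|_{2}^{2}$, which by the Sobolev inequality controls $u$ in a high Lebesgue norm. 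By (4.4) the $F_{2}$--integral is bounded below by $-\tfrac14\|\Delta u\|_{2}^{2}$ and is absorbed, and by (4.2) with $\mu+2>2\nu$ the $a_{1}$--integral is of lower order and absorbed by Young's inequality. The decisive estimate is
\[
\Big|\int_{\Omega}F_{1}(-\Delta u)\,dx\Big|\le\|M_{1}\|_{\infty}\,\|\nabla u\|_{2\rho}^{\rho}\,\|\Delta u\|_{2}+\|k\|_{2}\|\Delta u\|_{2},
\]
where $\|\nabla u\|_{2\rho}$ is interpolated by the Gagliardo--Nirenberg--Sobolev inequality between the top-order quantity $\|\Delta u\|_{2}$ and the norm furnished by the $M|u|^{\mu}u$ term, the constant being precisely $\widehat{C}(\mu,\rho,n)$. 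The admissible range $\tfrac{(\mu+2)n}{2(n+\mu)}\le\rho\le 1+\min\{\tfrac{\mu+2}{n+\mu},\tfrac{2}{n-2},\tfrac{\mu}{\mu+2}\}$ is exactly what forces the interpolation exponent $\theta$ into its permissible interval and makes the resulting power of $\|\Delta u\|_{2}$ at most $2$ and the accompanying power of the lower norm at most $\mu+2$. Young's inequality, invoking the quantitative coupling $2(\widehat{C}\|M_{1}\|_{\infty}^{2})^{2}\le M$, then absorbs this term into $\tfrac34\|\Delta u\|_{2}^{2}$ and the $M$--term, leaving $\langle f(u),-\Delta u\rangle\ge\nu(\|u\|_{X})\|u\|_{X}$ with $\nu(r_{0})\ge\delta_{0}>0$. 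Balancing the G-N-S exponents against $2(\widehat{C}\|M_{1}\|_{\infty}^{2})^{2}\le M$ is the delicate part.

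Finally I would verify (iii) in the generalized form (2.3) of Corollary 1. Splitting the $F_{2}$--increment into a $\zeta$--increment (lower-order arguments frozen) plus an $(\xi,\eta)$--increment and using (4.4)--(4.5),
\[
\|f(u_{2})-f(u_{1})\|_{2}\ge\big\|-\Delta(u_{2}-u_{1})+[F_{2}(\cdot,\Delta u_{2})-F_{2}(\cdot,\Delta u_{1})]\big\|_{2}-R\ge\tfrac34\|u_{2}-u_{1}\|_{X}-R,
\]
since the bracket is bounded pointwise by $\tfrac14|\Delta(u_{2}-u_{1})|$. Here $R$ collects the differences of $F_{0}$, $F_{1}$ and the $(\xi,\eta)$--variation of $F_{2}$; all of these involve only $u$ and $Du$, so by the Carathéodory growth and the compact embedding $X=W^{2,2}(\Omega)\hookrightarrow\hookrightarrow Z:=W^{1,2}(\Omega)$ they are dominated, uniformly on the bounded ball, by a modulus $\psi(\|u_{2}-u_{1}\|_{Z})$ with $\psi(0)=0$. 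This is precisely the structure $\Phi(\tau)+\psi(\sigma)$ of (2.3), with $\Phi(\tau)=\tfrac34\tau$, so Corollary 1 applies and $f(B_{r_{0}}^{X}(0))$ contains the bodily set $M$. Combined with $0$ being interior to $M$, as established above, this gives $0\in f(B_{r_{0}}^{X}(0))$, hence (4.1) is solvable in $W^{2,2}(\Omega)\cap W_{0}^{1,2}(\Omega)$.
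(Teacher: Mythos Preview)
Your proposal is correct and follows essentially the same route as the paper: the same spaces $X=W^{2,2}\cap W_{0}^{1,2}$, $Y=L^{2}$, the same choice $g(u)=-\Delta u$, the same coercivity computation (integration by parts yielding $M(\mu+1)\int|u|^{\mu}|\nabla u|^{2}$, absorption of $F_{2}$ via (4.4) at the cost $\tfrac14\|\Delta u\|_{2}^{2}$, and the Gagliardo--Nirenberg interpolation of $\|\nabla u\|_{2\rho}$ to handle $F_{1}$), and the same increment inequality separating the $\Delta$--part from the lower-order part. The only difference is packaging: where the paper argues closedness of $f(B_{r_{0}}^{X}(0))$ directly by taking $h_{m}\to h_{0}$, extracting a weakly convergent $u_{m_{k}}$, and reading off from the estimate (4.9) that $\Delta u_{m_{k}}$ is Cauchy in $L^{2}$, you instead cast the identical estimate as inequality (2.3) and invoke Corollary 1; this is the same mechanism viewed abstractly. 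As a side remark, the paper actually obtains the stronger conclusion $\mathrm{Im}\,f=L^{2}(\Omega)$, whereas your argument establishes $0\in\mathrm{Im}\,f$, which is all that is required for (4.1).
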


\begin{proof}
For the proof, it is enough to show that the operator $f:W^{2,2}\left(
\Omega \right) \cap W_{0}^{1,2}\left( \Omega \right) \longrightarrow
L^{2}\left( \Omega \right) $ fulfills all conditions of Theorem 1 (or
Theorem 3). For this, we will estimate the following dual form 
\begin{equation*}
\left\langle f\left( u\right) ,g\left( u\right) \right\rangle =\left\langle
-\Delta u+F\left( x,u,Du,\Delta u\right) ~,~-\Delta u\right\rangle ,
\end{equation*}%
where the operator $g$ is determined in the form $g\equiv -\Delta
:W^{2,2}\left( \Omega \right) \cap W_{0}^{1,2}\left( \Omega \right)
\longrightarrow L^{2}\left( \Omega \right) $, then we have 
\begin{equation*}
\left\langle -\Delta u+F\left( x,u,Du,\Delta u\right) ,-\Delta
u\right\rangle =\left\Vert \Delta u\right\Vert _{2}^{2}+\left\langle F\left(
x,u,Du,\Delta u\right) ,-\Delta u\right\rangle ,
\end{equation*}%
consequently we need\ to estimate the second term of this equation, i.e. the
dual form: $\left\langle F\left( x,u,Du,\Delta u\right) ,-\Delta
u\right\rangle $.

Thus using conditions (i), (a), (b), (c) we obtain 
\begin{equation*}
\left\langle F\left( x,u,Du,\Delta u\right) ,-\Delta u\right\rangle
=-\left\langle F_{0}\left( x,u\right) ,\Delta u\right\rangle -\left\langle
F_{1}\left( x,u,Du\right) ,\Delta u\right\rangle -
\end{equation*}%
\begin{equation*}
\left\langle F_{2}\left( x,u,Du,\Delta u\right) ,\Delta u\right\rangle
=-\left\langle M\ \left\vert u\right\vert ^{\mu }\ u+a_{1}\left( x,u\right)
,\Delta u\right\rangle -
\end{equation*}%
\begin{equation*}
\left\langle F_{1}\left( x,u,Du\right) ,\Delta u\right\rangle -\left\langle
F_{2}\left( x,u,Du,\Delta u\right) ,\Delta u\right\rangle \geq \left\langle
M\left( \mu +1\right) \ \left\vert u\right\vert ^{\mu }\ \nabla u,\nabla
u\right\rangle -
\end{equation*}%
\begin{equation*}
\left\Vert m_{0}\left\vert u\right\vert ^{\nu }+\psi \right\Vert
_{2}\left\Vert \Delta u\right\Vert _{2}-\left\Vert m_{1}\left( x,u,Du\right)
~\left\vert \nabla u\right\vert ^{\rho }+k\left( x\right) \right\Vert
_{2}\left\Vert \Delta u\right\Vert _{2}-
\end{equation*}%
\begin{equation*}
\left\Vert c\left( x,u,Du,\Delta u\right) \ \left\vert \Delta u\right\vert
\right\Vert _{2}\left\Vert \Delta u\right\Vert _{2}\geq M\left( \mu
+1\right) \left\Vert \left\vert u\right\vert ^{\frac{\mu }{2}}\ \nabla
u\right\Vert _{2}^{2}-
\end{equation*}%
\begin{equation*}
\varepsilon _{1}\left\Vert \left\vert u\right\vert ^{\frac{\mu +2}{2}%
}\right\Vert _{2}-\left\Vert M_{1}\right\Vert _{\infty }^{2}\left\Vert
\left\vert \nabla u\right\vert ^{\rho }\right\Vert _{2}^{2}-\left(
\varepsilon +4^{-1}+\left\Vert \psi _{1}\right\Vert _{\infty }\right)
\left\Vert \Delta u\right\Vert _{2}^{2}-
\end{equation*}%
\begin{equation}
C\left( \varepsilon ,\varepsilon _{1},m_{0},\left\Vert \psi \right\Vert
_{p},\left\Vert k\right\Vert _{p}\right) ,\quad \varepsilon ,\varepsilon
_{1}\in \left( 0,1\right) .  \tag{4.6}
\end{equation}

Hence we need to estimate the term $\left\Vert \left\vert \nabla
u\right\vert ^{\rho }\right\Vert _{2}^{2}$ by using $\left\Vert \left\vert
u\right\vert ^{\frac{\mu }{2}}\ u\right\Vert _{1,2}^{2}$ and $\left\Vert
u\right\Vert _{2,2}^{2}$ for which we will use the known inequality (G-N-S).
Using that we get 
\begin{equation*}
\left\langle F\left( x,u,Du,\Delta u\right) ,-\Delta u\right\rangle \geq
\left\langle M\left( \varepsilon _{1}\right) \left( \mu +1\right) \
\left\vert u\right\vert ^{\mu }\ \nabla u,\nabla u\right\rangle -\left(
\varepsilon +2^{-1}\right) \left\Vert \Delta u\right\Vert _{2}^{2}-
\end{equation*}%
\begin{equation*}
\widehat{C}\left( \mu ,\rho ,n\right) \left\Vert M_{1}\right\Vert _{\infty
}^{2}\left\Vert \Delta u\right\Vert _{2}^{\theta \rho }\left\Vert
u\right\Vert _{\frac{\left( \mu +2\right) n}{n-2}}^{\left( 1-\theta \right)
\rho }-C\left( \varepsilon ,\varepsilon _{1},m_{0},\left\Vert \psi
\right\Vert _{p},\left\Vert k\right\Vert _{p}\right) ,
\end{equation*}%
where $\theta =\left[ 2\rho \left( n+\mu \right) -\left( \mu +2\right) n%
\right] \cdot \left[ 4\rho \left( \mu +1\right) -\rho \mu n\right] ^{-1}$
for the considered case, and here $\widehat{C}\left( \mu ,\rho ,n\right) $
is the coefficient of the inequality G-N-S.

From here we obtain 
\begin{equation*}
\left\langle F\left( x,u,Du,\Delta u\right) ,-\Delta u\right\rangle \geq
\left\langle M_{\varepsilon _{1}}\left( \mu +1\right) \ \left\vert
u\right\vert ^{\mu }\ \nabla u,\nabla u\right\rangle -\left( \varepsilon +%
\frac{3}{4}\right) \left\Vert \Delta u\right\Vert _{2}^{2}-
\end{equation*}%
\begin{equation}
\left( \widehat{C}\left( \mu ,\rho ,n\right) \left\Vert M_{1}\right\Vert
_{\infty }^{2}\right) ^{2}\left\Vert u\right\Vert _{\frac{\left( \mu
+2\right) n}{n-2}}^{\mu +2}-C\left( \varepsilon ,\varepsilon
_{1},m_{0},\left\Vert \psi \right\Vert _{p},\left\Vert k\right\Vert
_{p}\right) ,  \tag{4.7}
\end{equation}%
where $\left( \widehat{C}\left( \mu ,\rho ,n\right) \left\Vert
M_{1}\right\Vert _{\infty }^{2}\right) ^{2}<M$ by the condition \textit{(b)}.

Now if we take into account (4.6) and (4.7), we get 
\begin{equation*}
\left\langle f\left( u\right) ,g\left( u\right) \right\rangle =\left\langle
-\Delta u+F\left( x,u,Du,\Delta u\right) ,-\Delta u\right\rangle \geq \left( 
\frac{1}{4}-\varepsilon \right) \left\Vert \Delta u\right\Vert _{2}^{2}+
\end{equation*}%
\begin{equation}
\widetilde{M}\ \left( \mu +1\right) \left\Vert \ \left\vert u\right\vert ^{%
\frac{\mu }{2}}\ \nabla u\right\Vert _{2}^{2}-C\left( \varepsilon
,\varepsilon _{1},m_{0},\left\Vert \psi \right\Vert _{p},\left\Vert
k\right\Vert _{p}\right) .  \tag{4.8}
\end{equation}

So, condition 3 of Theorem 3 is fulfilled since the operator $%
f:W^{2,2}\left( \Omega \right) \cap W_{0}^{1,2}\left( \Omega \right)
\longrightarrow L^{2}\left( \Omega \right) $ is bounded, that can be seen
easily from its expression and the conditions of this theorem.

Thus it follows that problem (4.1) is densely solvable in $L^{2}\left(
\Omega \right) $. Consequently, it remains to show that image $f\left(
W^{2,2}\left( \Omega \right) \cap W_{0}^{1,2}\left( \Omega \right) \right) $
is closed in the space $L^{2}\left( \Omega \right) $.

Let $h_{0}\in L^{2}\left( \Omega \right) $ then there is a sequence $\left\{
h_{m}\right\} _{m=1}^{\infty }\subset \func{Im}\ f\subseteq $ that $%
L^{2}\left( \Omega \right) $ converges to the given $h_{0}$ in $L^{2}\left(
\Omega \right) $, as $cl\ \mathrm{\func{Im}}\ f$ $\equiv L^{2}\left( \Omega
\right) $. For any $h_{m}$ we have the subset $f^{-1}\left( h_{m}\right) $
and since $\left\{ h_{m}\right\} $ is a bounded subset in $L^{2}\left(
\Omega \right) $ then there is a bounded subset $G$ of $W^{2,2}\left( \Omega
\right) \cap W_{0}^{1,2}\left( \Omega \right) $ such that $G\cap
f^{-1}\left( \left\{ h_{m}\right\} _{m=1}^{\infty }\right) \neq \varnothing $%
, in addition $G\cap f^{-1}\left( h_{m}\right) \neq \varnothing $ for any $%
h_{m}$. Then we can choose a sequence $\left\{ u_{m}\right\} \subset G$ such
that $f\left( u_{m}\right) =h_{m}$ which belongs to the bounded subset $G$.
From here, by using the reflexivity of the space $W^{2,2}\left( \Omega
\right) \cap W_{0}^{1,2}\left( \Omega \right) $ we can select a subsequence $%
\left\{ u_{m_{k}}\right\} _{k=1}^{\infty }\subseteq \left\{ u_{m}\right\}
_{m=1}^{\infty }$ that is a weakly convergent sequence in $W^{2,2}\left(
\Omega \right) \cap W_{0}^{1,2}\left( \Omega \right) $, i.e. there is an
element $u_{0}$ such that $u_{m_{k}}\rightharpoonup u_{0}$ in $W^{2,2}\left(
\Omega \right) \cap W_{0}^{1,2}\left( \Omega \right) $ (may be after the
choice of a subsequence of $\left\{ u_{m_{k}}\right\} _{k=1}^{\infty }$),
and consequently $u_{m_{k}}\longrightarrow u_{0}$ in $W^{1,p}\left( \Omega
\right) $, $1\leq p<2^{\ast }$.

Thus, we get $F_{0}\left( x,u_{m_{k}}\right) \longrightarrow F_{0}\left(
x,u_{0}\right) $, $F_{1}\left( x,u_{m_{k}},Du_{m_{k}}\right) \longrightarrow
F_{1}\left( x,u_{0},Du_{0}\right) $ in $L^{2}\left( \Omega \right) $ by the
conditions of Theorem 4 that $F_{i}:W^{2,2}\left( \Omega \right) \cap
W_{0}^{1,2}\left( \Omega \right) \longrightarrow L^{2}\left( \Omega \right) $%
, $i=0,1$, are continuous operators.

On the other hand for any $\varepsilon >0$ there exist $m_{k}$, $m_{l}\geq
m_{k}\left( \varepsilon \right) \geq 1$ such that the inequality 
\begin{equation*}
\varepsilon >\left\Vert h_{m_{k}}-h_{m_{l}}\right\Vert _{2}\equiv \left\Vert
f\left( u_{m_{k}}\right) -f\left( u_{m_{l}}\right) \right\Vert _{2}\geq
\end{equation*}%
\begin{equation*}
\frac{3}{4}\left\Vert \Delta \left( u_{m_{k}}-u_{m_{l}}\right) \right\Vert
_{2}+\widehat{M}\left\Vert u_{m_{k}}-u_{m_{l}}\right\Vert _{2\left( \mu
+1\right) }^{\mu +1}-
\end{equation*}%
\begin{equation*}
\left\Vert F_{0}\left( x,u_{m_{k}}\right) -F_{0}\left( x,u_{m_{l}}\right)
\right\Vert _{2}-\left\Vert F_{1}\left( x,u_{m_{k}},Du_{m_{k}}\right)
-F_{1}\left( x,u_{m_{l}},Du_{m_{l}}\right) \right\Vert _{2}-
\end{equation*}%
\begin{equation}
k\left( \left\Vert \Delta u_{m_{k}}\right\Vert _{2}\right) \left\Vert 
\widetilde{F}_{2}\left( x,u_{m_{k}},Du_{m_{k}}\right) -\widetilde{F}%
_{2}\left( x,u_{m_{l}},Du_{m_{l}}\right) \right\Vert _{2},  \tag{4.9}
\end{equation}%
holds, where $u_{m_{k}}\longrightarrow u_{0}$ in $W^{1,p}\left( \Omega
\right) $, $1\leq p<2^{\ast }$ and $u_{m_{k}}\rightharpoonup u_{0}$ in $%
W^{2,2}\left( \Omega \right) $. Hence we obtain that the last terms of (4.9)
converge to zero under $m_{k}\nearrow \infty $, then we get 
\begin{equation*}
\left\Vert \Delta \left( u_{m_{k}}-u_{m_{l}}\right) \right\Vert _{2}\searrow
0\quad \text{if \ }m_{k},m_{l}\nearrow \infty .
\end{equation*}

Consequently $\Delta u_{m_{k}}\longrightarrow \Delta u_{0}$ in $L^{2}\left(
\Omega \right) $, $F_{2}\left( x,u_{m_{k}},Du_{m_{k}},\Delta
u_{m_{k}}\right) \longrightarrow F_{2}\left( x,u_{0},Du_{0},\Delta
u_{0}\right) $ in $L^{2}\left( \Omega \right) $ and from the equality 
\begin{equation*}
\left\langle -\Delta u_{m_{k}}+F\left( x,u_{m_{k}},Du_{m_{k}},\Delta
u_{m_{k}}\right) ,v\right\rangle =\left\langle -\Delta
u_{m_{k}},v\right\rangle +\left\langle F_{0}\left( x,u_{m_{k}}\right)
,v\right\rangle -
\end{equation*}%
\begin{equation*}
\left\langle F_{1}\left( x,u_{m_{k}},Du_{m_{k}}\right) ,v\right\rangle
-\left\langle F_{2}\left( x,u_{m_{k}},Du_{m_{k}},\Delta u_{m_{k}}\right)
,v\right\rangle =\left\langle h_{m_{k}},v\right\rangle ,\quad \forall v\in
L_{2}\left( \Omega \right) \ \&\forall k\geq 1,
\end{equation*}%
we obtain that 
\begin{equation*}
\left\langle -\Delta u_{0}+F\left( x,u_{0},Du_{0},\Delta u_{0}\right)
,v\right\rangle =\left\langle h_{0},v\right\rangle ,\quad \forall v\in
L_{2}\left( \Omega \right) .
\end{equation*}%
Hence it follows that $h_{0}\in \mathrm{\func{Im}}\ f$, i.e. $\mathrm{\func{%
Im}}\ f\equiv f\left( W^{2,2}\left( \Omega \right) \cap W_{0}^{1,2}\left(
\Omega \right) \right) \equiv L^{2}\left( \Omega \right) $.
\end{proof}

\begin{remark}
The result of this theorem shows that we can consider the following problem%
\begin{equation*}
-\Delta u+M\ \left\vert u\right\vert ^{\mu }\ u=-a_{1}\left( x,u\right)
-F_{1}\left( x,u,Du\right) -F_{2}\left( x,u,Du,\Delta u\right) 
\end{equation*}%
let the operators $G_{0}:W^{2,2}\left( \Omega \right) \cap \overset{0}{W}%
~^{1,2}\left( \Omega \right) \longrightarrow L^{2}\left( \Omega \right) $
and $G_{1}:W^{2,2}\left( \Omega \right) \cap W_{0}^{1,2}\left( \Omega
\right) \longrightarrow L^{2}\left( \Omega \right) $ be defined by the
expressions 
\begin{equation*}
G_{0}\left( u\right) \equiv -\Delta u+M\ \left\vert u\right\vert ^{\mu }\
u,\quad G_{1}\left( u\right) \equiv -a_{1}\left( x,u\right) -F_{1}\left(
x,u,Du\right) -F_{2}\left( x,u,Du,\Delta u\right) 
\end{equation*}%
respectively, then existence of $G_{0}^{-1}:L^{2}\left( \Omega \right)
\longrightarrow W^{2,2}\left( \Omega \right) \cap W_{0}^{1,2}\left( \Omega
\right) $ is known and $G_{0}^{-1}$ is a bounded continuous operator. Now we
determine the operator $G\left( u\right) \equiv \left( G_{0}^{-1}\circ
G_{1}\right) \ \left( u\right) $ that acts from $W^{2,2}\left( \Omega
\right) \cap W_{0}^{1,2}\left( \Omega \right) $ to $W^{2,2}\left( \Omega
\right) \cap W_{0}^{1,2}\left( \Omega \right) $ and is bounded continuous
operator under the conditions of Theorem 4. Hence we obtain that the
operator $G$ possesses a fixed point that allows us to investigate the
problem on the existence of the eigenvalue of the operator $G_{0}$ relative
to the operator $G_{1}$, i.e. study of the problem $G_{0}\left( u\right)
=\lambda G_{1}\left( u\right) $. But in this case we can only conclude that $%
\lambda $ will be dependent on $u$.
\end{remark}

\begin{remark}
From the proof of this theorem it follows that the result of such type is
true and in the case of the operator $F\left( x,u,Du,\Delta u\right) $ in
the problem (4.1) is independent of $\Delta u$, i.e. it has the
representation $F\left( x,\xi ,\eta ,\zeta \right) \equiv F\left( x,\xi
,\eta \right) \equiv F_{0}\left( x,\xi \right) +F_{1}\left( x,\xi ,\eta
\right) $ for $\left( x,\xi ,\eta \right) \in \Omega \times \Re ^{n+1}.$
\end{remark}

\section{Nonlinear Equation with $p-$Laplacian}

On the open bounded domain $\Omega \subset R^{n}$ with sufficiently smooth
boundary $\partial \Omega $ consider the following problem 
\begin{equation}
f\left( u\right) \equiv -\nabla \left( \left\vert \nabla u\right\vert
^{p-2}\nabla u\right) +G\left( x,u,Du,\lambda ,\mu \right) =h\left( x\right)
,\quad x\in \Omega \subset R^{n},  \tag{5.1}
\end{equation}%
\begin{equation}
u\left\vert \ _{\partial \Omega }\right. =0,\quad n\geq 1,\ \Omega \in Lip,\
\ h\in W^{-1,q}\left( \Omega \right) .  \tag{5.2}
\end{equation}

Assume that 
\begin{equation}
G\left( x,\xi ,\eta ,\lambda \right) =\mu G_{0}\left( x,\xi \right) +\lambda
G_{1}\left( x,\xi ,\eta \right) ,  \tag{5.3}
\end{equation}%
holds for a.e. $x\in \Omega $ and any $\left( \xi ,\eta \right) \in R^{n+1}$%
, where $G_{1}\left( x,\xi ,\eta \right) $ and $G_{0}\left( x,\xi \right) $
are some Caratheodory functions, $\lambda \in R$, $\mu \geq 0$ are some
parameters.

\textbf{5.1. Dense solvability.} Let the following conditions 
\begin{equation}
G_{0}\left( x,\xi \right) \cdot \xi \geq a_{0}\left( x\right) \left\vert \xi
\right\vert ^{p_{0}}-a_{1}\left( x\right) ,\quad a_{0}\left( x\right) \geq
A_{0}>0;  \tag{5.4}
\end{equation}%
\begin{equation*}
\left\vert G_{0}\left( x,\xi \right) \right\vert \leq \widetilde{a}%
_{0}\left( x\right) \left\vert \xi \right\vert ^{p_{0}-1}+\widetilde{a}%
_{1}\left( x\right) ,\quad \widetilde{a}_{0}\left( x\right) ,\widetilde{a}%
_{1}\left( x\right) \geq 0,
\end{equation*}%
\begin{equation}
\left\vert G_{1}\left( x,\xi ,\eta \right) \right\vert \leq b_{0}\left(
x\right) \left\vert \eta \right\vert ^{p_{1}}+b_{1}\left( x\right)
\left\vert \xi \right\vert ^{p_{2}}+b_{2}\left( x\right) ,\ b_{j}\left(
x\right) \geq 0,\ j=0,1,2,  \tag{5.5}
\end{equation}%
hold for a.e. $x\in \Omega $ and any $\left( \xi ,\eta \right) \in R^{n+1}$
where $p_{0},p_{1},p_{2}\geq 0$, $p>1$ are some numbers, $a_{k}\left(
x\right) $, $\widetilde{a}_{k}\left( x\right) $ and $b_{j}\left( x\right) $
are some functions, $k=0,1$ and $j=0,1,2$.

Here we study the solvability of problem (5.1)-(5.2) in the generalized
sense, i.e. a function $u\in W_{0}^{1,p}\left( \Omega \right) $ is called a
solution of the problem (5.1), (5.2) if $u$\ satisfies the equation 
\begin{equation*}
\left\langle f\left( u\right) ,v\right\rangle =\left\langle h,v\right\rangle
,\quad v\in W_{0}^{1,p}\left( \Omega \right) ,
\end{equation*}%
for any $v\in W_{0}^{1,p}\left( \Omega \right) $.

\begin{theorem}
Let conditions (5.3) - (5.5) be fulfilled and $p_{0}-1\geq p_{2}\geq 0$, $p>1
$, $p>p_{1}\geq 0$. Moreover, let $a_{k}\left( x\right) $ , $\widetilde{a}%
_{k}\left( x\right) $, $b_{j}\left( x\right) $ be such functions that $a_{0},%
\widetilde{a}_{0}\in L^{\infty }\left( \Omega \right) $, $a_{1},\widetilde{a}%
_{1}\in L^{q}\left( \Omega \right) $ and $b_{0},b_{1}\in L^{\infty }\left(
\Omega \right) $ , $b_{2}\in L^{q}\left( \Omega \right) $. If $p_{1}$, $p_{0}
$ satisfy the inequalities $p_{1}\leq p-\frac{p}{p_{0}}$, $p_{0}\leq p^{\ast
}\equiv \frac{pn}{n-p}$, then there exist a subset $\mathcal{M}\subseteq $ $%
W^{-1,q}\left( \Omega \right) $ and some numbers $\mu _{0}>0$, $C_{0}>0$
such that $\overline{\mathcal{M}}^{W^{-1,q}}\equiv W^{-1,q}\left( \Omega
\right) $, $q=\frac{p}{p-1}\equiv p^{\prime }$, and $\mu \geq \mu _{0}>0$ , $%
\lambda :\left\vert \lambda \right\vert \leq C_{0}$, problem (5.1)-(5.2) is
solvable in $W_{0}^{1,p}\left( \Omega \right) $ for any $h\in \mathcal{M}$;
moreover if $\widetilde{p}=p_{0}$ or $p_{2}+1=p_{0}$ then $C_{0}\equiv
C_{0}\left( A_{0},b_{0},b_{1},\mu \right) $ is\ sufficiently small number. \ 
\end{theorem}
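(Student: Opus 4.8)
The plan is to recast problem (5.1)--(5.2) as an operator equation $f(u)=h$ with $f:X\longrightarrow Y$, where $X=W_{0}^{1,p}(\Omega)$ and $Y=W^{-1,q}(\Omega)=X^{\ast}$ (reflexive, with the equivalent norm $\|u\|_{X}=\|\nabla u\|_{p}$), and then to verify conditions (i) and (ii) of Theorem 1. Since $Y^{\ast}=(W^{-1,q})^{\ast}=W_{0}^{1,p}(\Omega)=X$, I would take the test mapping $g:X\longrightarrow Y^{\ast}$ to be the identity: then $\langle f(u),g(u)\rangle=\langle f(u),u\rangle$ is the natural duality pairing, the normalization $\|g(u)\|_{Y^{\ast}}=\|u\|_{X}$ of Remark 2 holds automatically, and the requirements $\mathrm{cl}\,g(S_{r}^{X}(0))=S_{r}^{Y^{\ast}}(0)$, $S_{r}^{X}(0)\subseteq g^{-1}(S_{r}^{Y^{\ast}}(0))$, $D(f)\subseteq D(g)$ are trivial. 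Dense solvability then follows from the first assertion of Theorem 1: the image $f(B_{r}^{X}(0))$ contains a subset everywhere dense in $M_{r}=\{h\in Y:\langle h,u\rangle\le\langle f(u),u\rangle,\ \forall u\in S_{r}^{X}(0)\}$.

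First I would check condition (i). Boundedness splits as $\|f(u)\|_{Y}\le\|{-}\nabla(|\nabla u|^{p-2}\nabla u)\|_{Y}+\mu\|G_{0}(\cdot,u)\|_{Y}+|\lambda|\,\|G_{1}(\cdot,u,Du)\|_{Y}$. The $p$-Laplacian term is dominated by $\|\nabla u\|_{p}^{\,p-1}=\|u\|_{X}^{\,p-1}$. By the growth bound in (5.5) (second line), $u\mapsto G_{0}(\cdot,u)$ is a bounded Nemytskii map $L^{p_{0}}\to L^{p_{0}'}$, and since $p_{0}\le p^{\ast}$ gives $W_{0}^{1,p}\hookrightarrow L^{p_{0}}$, dualizing yields $L^{p_{0}'}\hookrightarrow W^{-1,q}$. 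Testing $G_{1}(\cdot,u,Du)$ against $v\in W_{0}^{1,p}$ and using (5.5) with H\"older, the contributions are controlled by $\|v\|_{p/(p-p_{1})}$ and $\|v\|_{p_{2}+1}$, which embed into $\|v\|_{X}$ because $p/(p-p_{1})\le p_{0}\le p^{\ast}$ (exactly the hypothesis $p_{1}\le p-\tfrac{p}{p_{0}}$) and $p_{2}+1\le p_{0}\le p^{\ast}$. Continuity of $f$ follows from the (demi)continuity of the $p$-Laplacian together with the continuity of the Nemytskii operators between the relevant Lebesgue spaces and the compact Sobolev embeddings.

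The heart of the argument, and the step I expect to be the main obstacle, is condition (ii): the coercivity estimate for
\[
\langle f(u),u\rangle=\|\nabla u\|_{p}^{p}+\mu\int_{\Omega}G_{0}(x,u)\,u\,dx+\lambda\int_{\Omega}G_{1}(x,u,Du)\,u\,dx .
\]
Using (5.4) I would bound the second term below by $\mu A_{0}\|u\|_{p_{0}}^{p_{0}}-\mu\|a_{1}\|_{1}$, and then absorb the third. By (5.5) and H\"older, $\bigl|\int_{\Omega}G_{1}\,u\bigr|$ is dominated by $\|b_{0}\|_{\infty}\|\nabla u\|_{p}^{p_{1}}\|u\|_{p_{0}}+\|b_{1}\|_{\infty}\|u\|_{p_{2}+1}^{p_{2}+1}+\|b_{2}\|_{q}\|u\|_{X}$. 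The exponent hypotheses ($p_{1}<p$, $p_{2}+1\le p_{0}$, $p_{0}\le p^{\ast}$) let me split each term by Young's inequality into an arbitrarily small multiple of the two coercive quantities $\|u\|_{X}^{p}$ and $\mu A_{0}\|u\|_{p_{0}}^{p_{0}}$ plus a constant. Choosing $\mu\ge\mu_{0}$ large and $|\lambda|\le C_{0}$ small then leaves, for some $c_{1}>0$,
\[
\langle f(u),u\rangle\ge c_{1}\|u\|_{X}^{p}-C(\mu,\lambda),
\]
whence $\langle f(u),u\rangle\ge\nu(\|u\|_{X})\,\|u\|_{X}$ with $\nu(r)=c_{1}r^{\,p-1}-C(\mu,\lambda)/r$; since $p>1$ one has $\nu(r)\to+\infty$, so $\nu(r_{0})\ge\delta_{0}>0$ for $r_{0}$ large, which is (2.1).

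Finally I would combine these facts. For each radius $r$ the coercivity shows $M_{r}\supseteq\{h\in W^{-1,q}(\Omega):\|h\|_{W^{-1,q}}\le\nu(r)\}$, because $\langle h,u\rangle\le\|h\|_{W^{-1,q}}\,r\le\nu(r)\,r\le\langle f(u),u\rangle$ on $S_{r}^{X}(0)$. As $\nu(r)\to\infty$, the union $\bigcup_{r>0}M_{r}$ exhausts $W^{-1,q}(\Omega)$, so taking $\mathcal M$ to be the union of the dense subsets of the $M_{r}$ furnished by Theorem 1 gives $\overline{\mathcal M}^{W^{-1,q}}=W^{-1,q}(\Omega)$, and for every $h\in\mathcal M$ problem (5.1)--(5.2) admits a solution $u\in W_{0}^{1,p}(\Omega)$. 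The borderline situations flagged in the statement, $\widetilde{p}:=\tfrac{p}{p-p_{1}}=p_{0}$ (equivalently $p_{1}=p-\tfrac{p}{p_{0}}$) and $p_{2}+1=p_{0}$, are precisely those in which the $b_{0}$- or the $b_{1}$-contribution to $\langle G_{1},u\rangle$ carries exactly the critical power $\|u\|_{p_{0}}^{p_{0}}$; there the absorption into $\mu A_{0}\|u\|_{p_{0}}^{p_{0}}$ is not free, and one must impose $|\lambda|$ times the corresponding coefficient to be smaller than $\mu A_{0}$ (up to the embedding constant), which is why $C_{0}=C_{0}(A_{0},b_{0},b_{1},\mu)$ has to be taken sufficiently small.
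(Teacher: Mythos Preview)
Your proof is correct and follows essentially the same route as the paper: set $g=\mathrm{id}$, verify boundedness of $f$, and establish the coercivity inequality $\langle f(u),u\rangle\ge\nu(\|u\|_{X})\|u\|_{X}$ by combining (5.4) with a H\"older--Young estimate for the $G_{1}$-term, then invoke the first assertion of Theorem 1. Your write-up is in fact more explicit than the paper's in two respects: you spell out the boundedness check for the Nemytskii pieces (the paper only says ``$f$ is bounded by virtue of the obtained estimations''), and you make the passage from ``dense in each $M_{r}$'' to ``dense in all of $W^{-1,q}$'' via $\nu(r)\to\infty$ explicit. Note also that the paper formally works in $X=W_{0}^{1,p}(\Omega)\cap L^{p_{0}}(\Omega)$ with target $W^{-1,q}+L^{q_{0}}$, whereas you use $p_{0}\le p^{\ast}$ from the outset to collapse this to $W_{0}^{1,p}\to W^{-1,q}$; these are equivalent under the stated hypotheses.
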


\begin{proof}
Let $u\in W_{0}^{1,p}\left( \Omega \right) \cap L^{p_{0}}\left( \Omega
\right) $ and consider the dual form 
\begin{equation*}
\left\langle f\left( u\right) ,u\right\rangle \equiv \left\Vert \nabla
u\right\Vert _{p}^{p}+\left\langle G\left( x,u,Du,\lambda \right)
,u\right\rangle =
\end{equation*}%
\begin{equation*}
\left\Vert \nabla u\right\Vert _{p}^{p}+\left\langle \mu G_{0}\left(
x,u\right) ,u\right\rangle +\lambda \left\langle G_{1}\left( x,u,Du\right)
,u\right\rangle ,
\end{equation*}%
then by using conditions (5.4) and (5.5), we get 
\begin{equation*}
\left\langle f\left( u\right) ,u\right\rangle \geq \left\Vert \nabla
u\right\Vert _{p}^{p}+\mu \ \left\langle a_{0}\left( x\right) \left\vert
u\right\vert ^{p_{0}-2}u,u\right\rangle -\mu \left\Vert \ a_{1}\right\Vert
_{1}-
\end{equation*}%
\begin{equation*}
\left\vert \lambda \right\vert \left\langle b_{0}\left( x\right) \left\vert
\nabla u\right\vert ^{p_{1}},\left\vert u\right\vert \right\rangle
-\left\vert \lambda \right\vert \left\langle b_{1}\left( x\right) \left\vert
u\right\vert ^{p_{2}},\left\vert u\right\vert \right\rangle -\left\vert
\lambda \right\vert \left\langle b_{2}\left( x\right) ,\left\vert
u\right\vert \right\rangle ,
\end{equation*}%
or 
\begin{equation*}
\left\langle f\left( u\right) ,u\right\rangle \geq \left\Vert \nabla
u\right\Vert _{p}^{p}+\mu A_{0}\left\Vert u\right\Vert _{p_{0}}^{p_{0}}-\mu
\left\Vert a_{1}\right\Vert _{1}-\left\vert \lambda \right\vert \underset{%
\Omega }{\int }b_{0}\left( x\right) \left\vert \nabla u\right\vert
^{p_{1}}\left\vert u\right\vert dx-
\end{equation*}%
\begin{equation}
\left\vert \lambda \right\vert \underset{\Omega }{\int }b_{1}\left( x\right)
\left\vert u\right\vert ^{p_{2}+1}\ dx-\left\vert \lambda \right\vert 
\underset{\Omega }{\int }b_{2}\left( x\right) \left\vert u\right\vert \ dx. 
\tag{5.6}
\end{equation}

Since $b_{j}\in L^{\infty }\left( \Omega \right) $, $\ j=0,1$, it is enough
to estimate first integral of the right side of inequality (5.6). For this,
we have 
\begin{equation*}
\left\vert \lambda \right\vert \underset{\Omega }{\int }b_{0}\left( x\right)
\left\vert \nabla u\right\vert ^{p_{1}}\left\vert u\right\vert dx\leq
\left\vert \lambda \right\vert \left\Vert b_{0}\right\Vert _{\infty }\left[
\varepsilon \left\Vert \left\vert \nabla u\right\vert \right\Vert
_{p}^{p}+c\left( \varepsilon \right) \left\Vert u\right\Vert _{\widetilde{p}%
}^{\widetilde{p}}\right] .
\end{equation*}

By using the last inequality in (5.6) and taking into account the condition
on $p_{1},$ we obtain 
\begin{equation*}
\left\langle f\left( u\right) ,u\right\rangle \geq \left( 1-\varepsilon
\left\vert \lambda \right\vert \left\Vert b_{0}\right\Vert _{\infty }\right)
\left\Vert \left\vert \nabla u\right\vert \right\Vert _{p}^{p}+\left( \mu
A_{0}-\varepsilon _{1}\right) \left\Vert u\right\Vert _{p_{0}}^{p_{0}}-
\end{equation*}%
\begin{equation*}
c\left( \varepsilon \right) \left\vert \lambda \right\vert \left\Vert
b_{0}\right\Vert _{\infty }\left\Vert u\right\Vert _{\widetilde{p}}^{%
\widetilde{p}}-\left\vert \lambda \right\vert \underset{\Omega }{\int }%
b_{1}\left( x\right) \left\vert u\right\vert ^{p_{2}+1}\ dx-C_{\varepsilon
_{1}}\left( \left\vert \lambda \right\vert ,\mu ,\left\Vert a_{1}\right\Vert
_{q},\left\Vert b_{2}\right\Vert _{q}\right) .
\end{equation*}

since $p_{1}\leq p\left( 1-p_{0}^{-1}\right) $ by the conditions $\widetilde{%
p}\leq p_{0}$. Hence either of these cases take place: $\widetilde{p}<p_{0}$
and $p_{2}+1<p_{0}$ or one of equations $\widetilde{p}=p_{0}$ or $%
p_{2}+1=p_{0}$ holds, if $\widetilde{p}<p_{0}$ and $p_{2}+1<p_{0}$ then we
can estimate it by second term from right side of the previous inequality
with using Young inequality, and if $\widetilde{p}=p_{0}$ or $p_{2}+1=p_{0}$
then it is enough to choose number $\left\vert \lambda \right\vert $
sufficiently small. Thus we obtain the following inequality: 
\begin{equation*}
\left\langle f\left( u\right) ,u\right\rangle \geq \left( 1-\varepsilon
\left\vert \lambda \right\vert \left\Vert b_{0}\right\Vert _{\infty }\right)
\left\Vert \left\vert \nabla u\right\vert \right\Vert _{p}^{p}+\left( \mu
A_{0}-\varepsilon _{1}-\varepsilon _{2}\right) \left\Vert u\right\Vert
_{p_{0}}^{p_{0}}
\end{equation*}%
\begin{equation*}
-C_{\varepsilon _{1}}\left( \left\vert \lambda \right\vert ,\mu ,\varepsilon
_{2},\left\Vert a_{1}\right\Vert ,\left\Vert b_{1}\right\Vert ,\left\Vert
b_{2}\right\Vert \right) .
\end{equation*}

Consequently, inequality (2.1) of Theorem 1 is fulfilled, $\left\vert
\lambda \right\vert $ must be sufficiently small, i.e. the statement of
Theorem 5 is true since the operator 
\begin{equation*}
f:W_{0}^{1,p}\left( \Omega \right) \cap L^{p_{0}}\left( \Omega \right)
\longrightarrow W^{-1,q}\left( \Omega \right) +L^{q_{0}}\left( \Omega
\right) ,\ q_{0}=\frac{p_{0}}{p_{0}-1},
\end{equation*}%
is bounded by virtue of the obtained estimations here.
\end{proof}

\textbf{5.2. Everywhere solvability.} Now we reduce the conditions under
which imposed problem (5.1)-(5.2) is everywhere solvable. Let conditions
(5.3) - (5.5) be fulfilled and consider the following conditions 
\begin{equation*}
\left\vert G_{0}\left( x,\xi \right) -G_{0}\left( x,\xi _{1}\right)
\right\vert \leq c_{0}\left( x,\widetilde{\xi }\right) \ \left\vert \xi -\xi
_{1}\right\vert ,
\end{equation*}%
\begin{equation}
\left\vert G_{1}\left( x,\xi ,\eta \right) -G_{1}\left( x,\xi _{1},\eta
_{1}\right) \right\vert \leq c_{1}\left( x,\widetilde{\xi },\widetilde{\eta }%
\right) \ \left\vert \eta -\eta _{1}\right\vert ,  \tag{5.7}
\end{equation}%
hold for a.e. $x\in \Omega $, and any $\left( \xi ,\eta \right) $, $\left(
\xi _{1},\eta _{1}\right) \in \Re \times \Re ^{n}$, where $c_{0}\left( x,\xi
\right) $, $c_{1}\left( x,\xi ,\eta \right) $ are some Caratheodory
functions such that $\widetilde{\xi }=\widetilde{\xi }\left( \xi ,\xi
_{1}\right) $, $\widetilde{\eta }=\widetilde{\eta }\left( \eta ,\eta
_{1}\right) $ are continuous functions, and moreover $c_{1}\left( x,v,\nabla
v\right) $, $c_{0}\left( x,v\right) $ are bounded operators such that if $%
v\left( x\right) $ belongs to a bounded subset $D$ of $\ W_{0}^{1,p}\left(
\Omega \right) \cap L^{p_{0}}\left( \Omega \right) $, i.e. if $\left\Vert
v\right\Vert _{W_{0}^{1,p}\left( \Omega \right) \cap L^{p_{0}}\left( \Omega
\right) }\leq K_{0},$ then 
\begin{equation*}
\left\Vert c_{1}\left( x,v,\nabla v\right) \right\Vert _{L^{\infty }\left(
\Omega \right) }\leq K_{1},\quad \left\Vert c_{0}\left( x,v\right)
\right\Vert _{L^{\infty }\left( \Omega \right) }\leq K_{2},
\end{equation*}%
for some numbers $K_{0}$, $K_{1},K_{2}>0$ , i.e. 
\begin{equation}
c_{j}\left( x,\cdot ,\cdot \right) :W_{0}^{1,p}\left( \Omega \right) \cap
L_{p_{0}}\left( \Omega \right) \longrightarrow L^{\infty }\left( \Omega
\right) ,\ j=0,1,  \tag{5.8}
\end{equation}%
are bounded operators.

Then 
\begin{equation*}
\left\vert \left\langle G_{1}\left( x,u,\nabla u\right) -G_{1}\left(
x,v,\nabla v\right) ,u-v\right\rangle \right\vert \leq 
\end{equation*}%
\begin{equation*}
\left\Vert c_{1}\left( x,\widetilde{u}\left( u,v\right) ,\nabla \widetilde{u}%
\left( \nabla u,\nabla v\right) \right) \right\Vert _{\infty }\left\Vert
\nabla u-\nabla v\right\Vert _{p}\left\Vert u-v\right\Vert _{q},
\end{equation*}%
holds for any $u,v\in W_{0}^{1,p}\left( \Omega \right) \cap L^{p_{0}}\left(
\Omega \right) $. Hence we get 
\begin{equation*}
\left\langle f\left( u\right) -f\left( v\right) ,u-v\right\rangle \equiv
\left\langle \left\vert \nabla u\right\vert ^{p-2}\nabla u-\left\vert \nabla
v\right\vert ^{p-2}\nabla v,\nabla \left( u-v\right) \right\rangle +
\end{equation*}%
\begin{equation*}
\mu \left\langle G_{0}\left( x,u\right) -G_{0}\left( x,v\right)
,u-v\right\rangle +\lambda \left\langle G_{1}\left( x,u,Du\right)
-G_{1}\left( x,v,Dv\right) ,u-v\right\rangle \geq 
\end{equation*}%
\begin{equation*}
\widehat{C}_{0}\left\Vert \nabla \left( u-v\right) \right\Vert _{p}^{p}+\mu
\left\langle a\left( x\right) \left( \left\vert u\right\vert
^{p_{0}-2}u-\left\vert v\right\vert ^{p_{0}-2}v\right) ,u-v\right\rangle ,
\end{equation*}%
and we obtain the following inequality by using conditions (5.4), (5.7) and
(5.8) 
\begin{equation*}
\left\langle f\left( u\right) -f\left( v\right) ,u-v\right\rangle \geq 
\widehat{C}\left\Vert \nabla \left( u-v\right) \right\Vert _{p}^{p}+\widehat{%
A}_{0}\left\Vert u-v\right\Vert _{p_{0}}^{p_{0}}-
\end{equation*}%
\begin{equation*}
\left\Vert c_{1}\left( x,u,\nabla u,v,\nabla v\right) \right\Vert _{\infty
}\left\Vert \nabla u-\nabla v\right\Vert _{p}\left\Vert u-v\right\Vert _{q}.
\end{equation*}

Consequently, we have 
\begin{equation*}
\left\Vert f\left( u\right) -f\left( v\right) \right\Vert _{W_{q}^{-1}\left(
\Omega \right) }\cdot \left\Vert \nabla \left( u-v\right) \right\Vert
_{p}\geq \widehat{C}\left\Vert \nabla \left( u-v\right) \right\Vert _{p}^{p}+%
\widehat{A}_{0}\left\Vert u-v\right\Vert _{p_{0}}^{p_{0}}-
\end{equation*}%
\begin{equation*}
\left\Vert c_{1}\left( x,u,\nabla u,v,\nabla v\right) \right\Vert _{\infty } 
\left[ \varepsilon \left\Vert \nabla \left( u-v\right) \right\Vert
_{p}^{p}+c\left( \varepsilon \right) \left\Vert u-v\right\Vert _{q}^{q}%
\right] ,
\end{equation*}%
or 
\begin{equation*}
\left\Vert f\left( u\right) -f\left( v\right) \right\Vert _{W_{q}^{-1}\left(
\Omega \right) }\cdot \left\Vert \nabla \left( u-v\right) \right\Vert
_{p}\geq \widehat{C}_{1}\left\Vert \nabla \left( u-v\right) \right\Vert
_{p}^{p-1}+
\end{equation*}%
\begin{equation*}
\widehat{A}_{0}\left\Vert u-v\right\Vert _{p_{0}}^{p_{0}}-c\left(
\varepsilon \right) \left\Vert u-v\right\Vert _{q}^{q}.
\end{equation*}

Thus we get that the conditions of Corollary 1 are fulfilled, i.e. if we
continue this proof as in the section 4 then we obtain that the following
result is true.

\begin{theorem}
Let conditions (5.3)-(5.5), (5.7) and (5.8) be fulfilled and the numbers $%
\lambda ,\mu $ satisfy the conditions of Theorem 5, then problem (5.1)-(5.2)
is solvable in $W_{0}^{1,p}\left( \Omega \right) \cap L^{p_{0}}\left( \Omega
\right) $ for any $h\in W^{-1,q}\left( \Omega \right) $.
\end{theorem}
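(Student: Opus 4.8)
The plan is to upgrade the dense solvability of Theorem 5 to everywhere solvability by proving that $\mathrm{Im}\, f$ is a \emph{closed} subset of $W^{-1,q}(\Omega)$: since Theorem 5 already gives $\overline{\mathrm{Im}\, f}^{\,W^{-1,q}} = W^{-1,q}(\Omega)$, closedness forces $\mathrm{Im}\, f = W^{-1,q}(\Omega)$. The monotonicity-type estimate derived just above the statement,
\[
\|f(u)-f(v)\|_{W_q^{-1}}\,\|\nabla(u-v)\|_p \geq \widehat{C}_1\|\nabla(u-v)\|_p^{p-1} + \widehat{A}_0\|u-v\|_{p_0}^{p_0} - c(\varepsilon)\|u-v\|_q^q,
\]
already exhibits the structure required by Corollary 1: the first two terms play the role of $\Phi$ in the strong norm of $Z := W_0^{1,p}(\Omega)\cap L^{p_0}(\Omega)$, while the subtracted term is the $\psi$-term to be absorbed through the compact embedding $Z \hookrightarrow L^q(\Omega)$. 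So I would invoke Corollary 1 to legitimize this structure and then run the closedness argument exactly as in the proof of Theorem 4.

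Concretely, I would take $h_0$ in the closure of $\mathrm{Im}\, f$ and a sequence $h_m = f(u_m) \to h_0$ in $W^{-1,q}(\Omega)$. The coercivity established in Theorem 5 bounds $\{u_m\}$ in $Z$, so by reflexivity I may pass to a subsequence $u_{m_k} \rightharpoonup u_0$ in $Z$, and by Rellich--Kondrachov $u_{m_k} \to u_0$ strongly in $L^q(\Omega)$ (and in the lower-order spaces needed to treat $G_0,G_1$). Applying the displayed estimate to the pair $u_{m_k},u_{m_l}$, the left-hand side is dominated by $\|h_{m_k}-h_{m_l}\|_{W^{-1,q}}$ times the bounded quantity $\|\nabla(u_{m_k}-u_{m_l})\|_p$ and hence tends to $0$ as $k,l\to\infty$, while the subtracted term $c(\varepsilon)\|u_{m_k}-u_{m_l}\|_q^q \to 0$ by the strong $L^q$-convergence. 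Consequently both positive terms vanish in the limit, which shows that $\{u_{m_k}\}$ is Cauchy in $Z$; in particular $\nabla u_{m_k} \to \nabla u_0$ strongly in $L^p(\Omega)^n$.

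With the strong gradient convergence in hand I would pass to the limit in the weak formulation. Continuity of the Nemytskii map $\eta \mapsto |\eta|^{p-2}\eta$ from $L^p(\Omega)^n$ to $L^q(\Omega)^n$ gives $|\nabla u_{m_k}|^{p-2}\nabla u_{m_k} \to |\nabla u_0|^{p-2}\nabla u_0$ in $L^q$, so the $p$-Laplacian term converges in $W^{-1,q}(\Omega)$; the Lipschitz hypotheses (5.7)--(5.8) together with the strong convergence handle $G_0(x,u_{m_k})$ and $G_1(x,u_{m_k},Du_{m_k})$. Passing to the limit in $\langle f(u_{m_k}),v\rangle = \langle h_{m_k},v\rangle$ yields $\langle f(u_0),v\rangle = \langle h_0,v\rangle$ for all $v\in W_0^{1,p}(\Omega)$, whence $h_0 = f(u_0)\in\mathrm{Im}\, f$ and $\mathrm{Im}\, f$ is closed.

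The main obstacle is the passage to the limit through the quasilinear $p$-Laplacian, which is genuinely nonlinear in the top-order derivatives and therefore does not survive mere weak convergence of $u_{m_k}$; everything hinges on first extracting \emph{strong} convergence of the gradients. That strong convergence is exactly what the displayed difference estimate is engineered to deliver: the positive coercive part in the $Z$-norm dominates once the weaker $L^q$-term has been absorbed via the compact embedding, which is the role played by Corollary 1. The remaining points are routine --- verifying that $q = p' < p^\ast$ so that $Z \hookrightarrow L^q$ is genuinely compact, and checking that the range of $\lambda,\mu$ prescribed in Theorem 5 keeps the coefficients $\widehat{C}_1,\widehat{A}_0$ strictly positive.
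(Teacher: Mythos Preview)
Your proposal is correct and follows essentially the same route as the paper: the text preceding Theorem 6 derives precisely the displayed difference estimate, observes that it has the form required by Corollary 1, and then says to ``continue this proof as in the section 4'', i.e.\ to run the closedness-of-the-image argument via a Cauchy-sequence extraction exactly as you outline. Your write-up simply makes explicit the steps (Rellich--Kondrachov compactness to kill the $L^q$ term, strong gradient convergence, Nemytskii continuity for the $p$-Laplacian) that the paper leaves implicit in that cross-reference.
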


\begin{remark}
It should be noted that a remark similar to remark 2 takes place for the
problem investigated here. Moreover, we can obtain the same conclusion for
the considered general case. Therefore, for the investigation of the
spectrum of the nonlinear operators by using the fixed-point theorem
mentioned above, we need to consider some particular cases of these problems.
\end{remark}


\begin{thebibliography}{10}
\bibitem[1]{s1} K. N. Soltanov, Perturbation of the mapping and solvability
theorems in the Banach space, NA, TMA, 72 (2010), no. 1, 164--175 .

\bibitem[2]{ba} C. S. Barroso, Semilinear Elliptic Equations and Fixed
Ponts, Proc. AMS 133 (2004), no. 3, 745-749.

\bibitem[3]{chw} M. Chipot, F. Weissler, On the Elliptic Problem $\Delta
u-\left\vert \nabla u\right\vert ^{q}+\lambda u^{p}=0$, Nonlinear Diffusion
Equations and their equilibruim states, I, Berkeley, CA, Math Sci. Res.
Inst. Publ., 12 (1986), Springer, N-Y, 237-243.

\bibitem[4]{iw} G. Infante, J. R. L. Webb, Nonlinear non-local
boundary-value problems and perturbed Hammerstein integral equations. Proc.
Edinb. Math. Soc. (2) 49 (2006), no. 3, 637--656.

\bibitem[5]{llz} F. Li, Zh. Liang, Q. Zhang, Existence of solutions to a
class of nonlinear second order two-point boundary value problems, J. Math.
Anal. Appl., 312 (2005), no. 1, 357-373.

\bibitem[6]{m} J. Mawhin, Boundary value problems for nonlinear
perturbations of some $p$-Laplacians. Fixed point theory and its
applications, Banach Center Publ., 77, Polish Acad. Sci., Warsaw, (2007),
201--214.

\bibitem[7]{mm} M. Montenegro, M. Montenegro, Existence and Nonexistence of
solutions for Quasilinear Elliptic Equations, J. Math. Anal. Appl. 245
(2000), 303-316.

\bibitem[8]{p} W.V. Petryshyn, Generalized Topologikal Degree and Semilinear
Equations, (1995), Cambridge Univ. Press, Cambridge, UK, x+240 pp

\bibitem[9]{qz} Qi-Jian Tan, Zhong-Jian Leng, The method of upper and lower
solutions for diffraction problems of quasilinear elliptic
reaction--diffusion systems, J. Math. Anal. Appl., 380 (2011), no 1, 363-376.

\bibitem[10]{r} D. Ruiz, A priori Estimates and Existence of Positive
Solutions for strongly Nonlinear Problems, J. Diff. Eq., 199 (2004), no 1,
96-114.

\bibitem[11]{s2} K.N. Soltanov, Nonlinear mappings and the solvability of
nonlinear equations.Soveit.Math. Dokl., v. 34 (1987), no 1, 242-246.

\bibitem[12]{s3} K.N. Soltanov, \textit{Some applications of nonlinear
analysis to differential equations.} (2002), Baku, ELM, (Russian), iii+292 .

\bibitem[13]{z} E. Zeidler, Nolinear Functional Analysis and its
Applications II/B: Nonlinaer Monotone Operators, (1990), Springer-Verlag,
Berlin....viii+733.

\bibitem[14]{s4} K. N. Soltanov, On semi-continuous mappings, equations and
inclusions in the Banach space, Hacettepe J. Math. \& Statist. 37 (2008),
no. 1, 9--24.

\bibitem[15]{br} H. Brezis, \textit{Functional analysis, Sobolev spaces and
partial differential equations}, (2011), Universitext, Springer, New York,
xiv+599 pp

\bibitem[16]{y} K. Yosida, Functional analysis. (1980), 123,
Springer-Verlag, Berlin, Heid., N-Y, xii+501.
\end{thebibliography}
\end{document}